\tikzstyle{startstop} = [rectangle, rounded corners, 
\tikzstyle{starts} = [rectangle, rounded corners, 
\tikzstyle{process} = [rectangle, rounded corners,
\tikzstyle{arrow} = [thick,->,>=stealth]
\newtheorem{theorem}{Theorem}[section]
\newtheorem{proposition}{Proposition}[section]
\newtheorem{corollary_p}{Corollary}[proposition]
\newtheorem{lemma}{Lemma}[section]
\newtheorem{definition}{Definition}[section]
\newtheorem{algorithm}{Algorithm}[section] 
\author{Sumit Suthar\footnote{Email address: sutharsumit@iisc.ac.in, sumitsuthar@live.in}, Soumyendu Raha\footnote{Email address: raha@iisc.ac.in}\\ Indian Institute of Science, Bengaluru.}
\title{Explicitly Constrained Stochastic Differential Equations on Manifolds.}
\date{July 27, 2023}
\begin{document}
\maketitle
\begin{abstract}
In this manuscript we consider Intrinsic Stochastic Differential Equations on manifolds and constrain it to a level set of a smooth function. Such type of constraints are known as explicit algebraic constraints. The system of differential equation and the algebraic constraints is, in combination, called the Stochastic Differential Algebraic Equations (SDAEs). We consider these equations on manifolds and present methods for computing the solution of SDAEs on manifolds.
\end{abstract}
\bigskip
\medskip
\noindent {\bf Keywords:} Stochastic Differential Geometry, Stochastic Differential Equations on Manifolds. Ito Stochastic Differential Equations on Manifolds. Global Stochastic Analysis, Constrained Dynamics, Constrained Stochastic Differential Equations.

\section{Introduction.}
In many areas of physics and engineering such as data science, machine-learning, molecular dynamics, astronomy, robotics, etc., we find that the state space of the dynamical system is a manifold. When these systems are subject to noise and operation/performance related constraints (e.g. no-slip velocity constraint in physics, data manifold as constraint in the subject of data science, trajectory constraint in robotics, etc.), it is natural to model the dynamical system using constrained Stochastic Differential Equations (SDEs) on manifolds. In this article, we focus on SDEs on manifolds that are constrained to evolve on a level set of a map between two manifolds. We call these equations as \textit{explicit Stochastic Differential-Algebraic Equations (SDAEs)}.

In \cite{suthar2021explicit}, the system of explicit Stochastic Differential Algebraic Equations on Euclidean spaces is defined as
\begin{subequations}
\label{eq:SDAE1}
\begin{equation}
\label{eq:SDAE-SDE}
dx = f(x,u)dt + \sigma(x,u)dW_t,  
\end{equation}
\begin{equation}
\label{eq:SDAE-AE}
    h(x,u) = 0
\end{equation}
\end{subequations}
where $W_t = (W^1_t, W^2_t, ..., W^d_t)$ is a d-dimensional Wiener process, $f:\mathbb{R}^n\times \mathbb{R}^m\to \mathbb{R}^n$, and $\sigma:\mathbb{R}^n\times \mathbb{R}^m\to L(\mathbb{R}^d,\mathbb{R}^n)$, $h:\mathbb{R}^n\times \mathbb{R}^m \to \mathbb{R}^q$, $x(t,\omega) \in \mathbb{R}^n$, and $u(t,\omega) \in \mathbb{R}^m$. If the stochastic processes are evolving on a manifold, then this definition can be directly extended by considering a generic form of explicit SDAEs on manifolds as
\begin{subequations}
\label{equation:SDAE on manifold strato}
\begin{equation}
\label{equation:SDAE on manifold - eq strato}
\delta X_t = V(X_t,U_t) dt + \sum_{l = 1}^d\sigma_l(X_t,U_t)\circ dW^l_t,
\end{equation}
\begin{equation}
\label{equation:SDAE on manifold - constraint strato}
h(X_t,U_t) = p;
\end{equation}
\end{subequations}
where the SDE in equation \eqref{equation:SDAE on manifold - eq strato} is in Stratonovich representation, $X_t\in M$, $U_t\in N$, $p\in P$, $h:M\times N \to P$, $V:M\times N\to TM$ such that $\tau_M(V(\cdot, u)) = Id_M$ for all $u\in N$, $\sigma_l:M\times N\to TM$ such that $\tau_M(\sigma_l(\cdot, u)) = Id_M$ for all $u\in N$ and $l\in \{1, 2, ..., d\}$; and $M$, $N$, and $P$ are smooth manifolds of dimension $n$, $m$, and $q$ respectively. However, on manifolds, Stratonovich representation of SDEs make the stochastic analysis in the mean sense difficult. This difficulty is avoided if the SDE is represented in the standard Ito form. In \cite{gliklikh2011global}, readers will find that the standard Ito SDE representation requires the manifold to be equipped with a connection. If the manifolds is not equipped with a connection then the only way to avoid the difficulty in stochastic analysis in mean sense, is by considering the Intrinsic representation, which is the most general way of representing SDEs on manifolds. The Intrinsic form of representing SDEs depends on Schwartz morphism that morphs a semi-martingale from one manifold to the other. A few techniques of constructing Schwartz morphism from $\mathbb{R}^{p+1}\to M$ such that it morphs the stochastic process $(t,W_t)\in \mathbb{R}^{p+1}$ into a semi-martingale on $M$, where $W_t$ is a $p$-dimensional Wiener process, have been demonstrated in \cite{suthar2022Intrinsic}. These constructions rely on a special type of fiber preserving maps from the tangent bundle to the diffusion bundle. These maps are called diffusion generators. Using the diffusion generator approach, it is possible to write the SDE in terms of the differential $dt$ and the Ito differential $dW_t$, hence making the stochastic analysis in mean sense easier. In this article, we combine the ideas on explicitly constrained SDEs on Euclidean spaces from \cite{suthar2021explicit} with that on Intrinsic SDEs via diffusion generator approach from \cite{suthar2022Intrinsic}.

\subsection{Key results.}
The main results of this manuscript can be found in section \ref{section:explicit SDAEs on mani} and section \ref{section:approximate solution}. In section \ref{section:explicit SDAEs on mani} we have introduced the notion of index for SDAEs on manifolds. We have proven the existence of a class of SDAEs that have no solution. We call these equations as \textit{ill-posed} SDAEs. The existence of these type of equations make it difficult to have a general method to solve high index SDAEs. However, if we relax the constraint, we find that the stochastic process may hit the cut-locus where the distance function loses its differentiability. We have discussed such approximate relaxed solution in section \ref{section:approximate solution} and have also shown that it is possible to reduce the probability of hitting the cut-locus. We have presented an algorithm for numerical computations in section \ref{section:numerical}. We have considered an example of SDAE on a sphere to demonstrate the algorithm.

\subsection{Basic definitions and notations.}
We will be following the notational convention from \cite{suthar2022Intrinsic}. Sections of any fiber bundle $F$ are denoted by $\Gamma(F)$ and the set of all smooth vector fields on manifold $M$ by $\mathfrak{X}(M)$. The set of all smooth functions is denoted by $\mathfrak{F}(M)$. We call the {\textit{second order tangent bundle}} of an n-manifold $M$ as \textbf{\textit{diffusion bundle}} to avoid confusing it with second tangent bundle TTM.  It is denoted by $\mathfrak{D}M$. The fiber of the diffusion bundle at point $x\in M$ is called diffusion space and is denoted by $\mathfrak{D}_xM$. The elements of the diffusion space are called \textbf{\textit{diffusors}}. A \textbf{\textit{diffusor field}} is defined as the smooth sections of the diffusion bundle $\mathfrak{D}M$. Following the notational convention for section of a fiber bundle, the set of all smooth diffusor field is denoted as $\Gamma(\mathfrak{D}M)$. If $\phi:M\to N$, the fiber preserving map over $\phi$, $\mathfrak{D}\phi:\mathfrak{D}M\to \mathfrak{D}N$ is called the \textit{\textbf{diffusion map}}. Locally in charts $(U,\Upsilon)$ on $M$ and $(V,\chi)$ on $N$, for all $L\in \mathfrak{D}M$ such that $L|_U = a^i\partial_i + b^{ij}\partial^2_{ij}$,
\[\mathfrak{D}\phi \left(L\right)|_V = \left[ a^i\partial_i\phi^k + b^{ij}\partial^2_{ij}\phi^k\right] \partial_k + \left[b^{ij}\partial_i\phi^k \partial_j\phi^l\right]\partial^2_{kl}.\]
Given $L\in \mathfrak{D}_xM$, there exists a symmetric contravariant tensor $\hat{L}\in T^2_0M$ such that
\[\hat{L}(df, dg) = \dfrac{1}{2}(L[fg] - fL[g] - gL[f]).\] Moreover, if $L|_U = a^i\partial_i + b^{ij}\partial^2_{ij}$ then locally \[\hat{L}(df,dg) = b^{ij}\partial_if\partial_jg.\]
\noindent In other words, $\hat{\cdot} : \mathfrak{D}M\to T_0^2M$ such that $\hat{L}$ is the  symmetric part of the diffusor $L$.

\section{Background.}
In \cite{suthar2021explicit}, a system of explicit Stochastic Differential Algebraic Equations on Euclidean spaces is defined as
\begin{subequations}
\label{eq:SDAE1}
\begin{equation}
\label{eq:SDAE-SDE}
dx = f(x,u)dt + \sigma(x,u)dW_t,  
\end{equation}
\begin{equation}
\label{eq:SDAE-AE}
    h(x,u) = - \int_0^t\Gamma(x(s),u(s))dW_s;
\end{equation}
\end{subequations}
where $W_t = (W^1_t, W^2_t, ..., W^d_t)$ is a d-dimensional Wiener process, $f:\mathbb{R}^n\times \mathbb{R}^m\to \mathbb{R}^n$, and $\sigma:\mathbb{R}^n\times \mathbb{R}^m\to L(\mathbb{R}^d,\mathbb{R}^n)$, $h:\mathbb{R}^n\times \mathbb{R}^m \to \mathbb{R}^q$, $\Gamma:\mathbb{R}^n\times \mathbb{R}^m \to L(\mathbb{R}^d,\mathbb{R}^q)$, $x(t,\omega) \in \mathbb{R}^n$, and $u(t,\omega) \in \mathbb{R}^m$. In the same article it has been demonstrated that it is possible to convert a given SDAE with noisy constraint into an SDAE with noiseless constraint. Hence, for SDAEs on manifolds we can simply consider constraints without noise. A generic form of explicit SDAE on manifolds, in Stratonovich representation, can be given as
\begin{subequations}
\begin{equation}
\delta X_t = V(X_t,U_t) dt + \sum_{l = 1}^d\sigma_l(X_t,U_t)\circ dW^l_t,
\tag{\ref{equation:SDAE on manifold - eq strato}}
\end{equation}
\begin{equation}
h(X_t,U_t) = p;
\tag{\ref{equation:SDAE on manifold - constraint strato}}
\end{equation}
\end{subequations}
where $X_t\in M$, $U_t\in N$, $p\in P$, $h:M\times N \to P$, $V:M\times N\to TM$ such that $\tau_M(V(\cdot, u)) = Id_M$ for all $u\in N$, $\sigma_l:M\times N\to TM$ such that $\tau_M(\sigma_l(\cdot, u)) = Id_M$ for all $u\in N$ and $l\in \{1, 2, ..., d\}$; and $M$, $N$, and $P$ are smooth manifolds of dimension $n$, $m$, and $q$ respectively. However, as discussed in the introduction, on manifolds it is convenient to represent SDEs in Intrinsic form that depends on the Schwartz's second order stochastic differential geometry. To recall, if we consider two manifolds $M$ and $N$ with $x\in M$ and $y\in N$, then if there exists a linear map $J(x,y):\mathfrak{D}_xM\to \mathfrak{D}_yN$ such that $Img(J|_{T_xM})\subset T_yN$ and $\widehat{JL} = (J|_{T_xM}\otimes J|_{T_xM}) \hat{L},$ then such a map $J$ is called a {\textit{Schwartz morphism}}. In Schwartz's second order stochastic differential geometry, the key idea is the infinitesimal representation of a semi-martingale. For a semi-martingale $X_t$ on a manifold $M$, an infinitesimal increment $\mathbf{d}X_t$ is defined as an element of the diffusion space at $X_t\in M$. This infinitesimal increment is called the \textit{Intrinsic differential} or the \textit{Schwartz differential}. Using this differential, and the Schwartz morphism $J$ from $M$ to $N$, it is possible to morph the semi-martingale $X_t\in M$ into a semi-martingale $Y_t\in N$, such that locally we have
\[\mathbf{d}Y_t = J(X_t,Y_t)\mathbf{d}X_t.\]
These equation have been called intrinsic SDEs in \cite{emery2012stochastic}. Reader's can refer \cite{schwartz1982geometrie,emery2012stochastic} for Schwartz's second order stochastic differential geometry.

In \cite{suthar2022Intrinsic}, a techniques of constructing Schwartz morphism from $\mathbb{R}^{p+1}\to M$ have been presented. A Schwartz morphism constructed using these techniques can transform the stochastic process $(t,W_t)\in \mathbb{R}^{p+1}$ into a semi-martingale on $M$, where $W_t$ is a $p$-dimensional Wiener process. The key idea of this construction, presented in \cite{suthar2022Intrinsic}, is that of \textit{diffusion generator}. A fiber preserving map $G:TM\to \mathfrak{D}M$ is called a \textbf{\textit{diffusion generator}} if $\forall$ $Y\in TM$, $\widehat{G(Y)} = Y\otimes Y$. As per the diffusion generator approach, a semi-martingale $X_t\in M$ is given by the Intrinsic SDE,
\begin{equation}
\label{equation:Intrinsic_SDE_diffusion_generator}
\mathbf{d}X_t = \left[V(X_t) + \dfrac{1}{2} \sum_{l = 1}^p G(\sigma_l(X_t))\right]dt + \sum_{l = 1}^p\sigma_l(X_t) dW^l_t,
\end{equation}
where $V\in\mathfrak{X}(M)$ and $\sigma_l\in \mathfrak{X}(M)$ for all $l\in \{1, 2, ..., p\}$.

Using this way of representing the SDEs on manifolds, we can define a generic form of explicit SDAE on manifolds as
\begin{subequations}
\begin{equation*}
\mathbf{d}X_t = \left[V(X_t,U_t) + \dfrac{1}{2} \sum_{l = 1}^d  {}^MG(\sigma_l(X_t,U_t))\right]dt + \sum_{l = 1}^d\sigma_l(X_t,U_t) dW^l_t,
\end{equation*}
\begin{equation*}
h(X_t,U_t) = p;
\end{equation*}
\end{subequations}
where $X_t\in M$, $U_t\in N$, $p\in P$, $h:M\times N \to P$, $V:M\times N\to TM$ such that $\tau_M(V(\cdot, u)) = Id_M$ for all $u\in N$, $\sigma_l:M\times N\to TM$ such that $\tau_M(\sigma_l(\cdot, u)) = Id_M$ for all $u\in N$ and $l\in \{1, 2, ..., d\}$, ${}^MG$ is a diffusion generator on $M$. We are interested in formulating techniques to solve these equations. For this purpose we will convert the given Intrinsic SDE into Stratonovich SDE for which the usual differential geometry can be applied. Once the solution is found, we can convert it back into Intrinsic representation. In \cite{suthar2022Intrinsic}, it has been shown that the equivalent Stratonovich representation of the intrinsic SDE \eqref{equation:Intrinsic_SDE_diffusion_generator} is given by
\begin{equation}
\label{equation:Stratonovich for Intrinsic}
\delta X_t = \left[V(X_t) + \dfrac{1}{2} \sum_{l = 1}^p \nabla^G_S(\sigma_l(X_t))\right]dt + \sum_{l = 1}^p\sigma_l(X_t)\circ dW^l_t,
\end{equation}
where $\nabla^G_S = (G - G_S): TM\to TM$ is a fiber preserving map over identity, and $G_S$ is a diffusion generator called the \textbf{\textit{Stratonovich diffusion generator}}; and the equivalent standard Ito representation is given by
\begin{equation}
\label{equation: standard Ito for Intrinsic}
dX_t = \left[V(X_t) + \dfrac{1}{2} \sum_{l = 1}^p \nabla^G_I(\sigma_l(X_t))\right]dt + \sum_{l = 1}^p\sigma_l(X_t) dW^l_t,
\end{equation}
where $\nabla^G_I = (G - G_I):TM\to TM)$ is a fiber preserving map over identity, and $G_I$ is a diffusion generator called the \textbf{\textit{Ito diffusion generator}}. However, in order to define $G_I$, the manifold $M$ must be equipped with a connection. Readers can refer to \cite{gliklikh2011global} for further details on standard Ito representation of SDEs. In short hand notation we can write these conversion formulae as,
\begin{equation}
\label{equation:conversion_shorthand_stratonovich}
\delta X_t = \mathbf{d}X_t  - \dfrac{1}{2}\sum_{l = 1}^pG_S(\sigma_l(X_t)) dt,
\end{equation}
and
\begin{equation}
\label{equation:conversion_shorthand_Ito}
d X_t = \mathbf{d}X_t  - \dfrac{1}{2}\sum_{l = 1}^pG_I(\sigma_l(X_t)) dt
\end{equation}
respectively.

\section{Explicit Stochastic Differential-Algebraic Equations on manifolds.}
\label{section:explicit SDAEs on mani}
\begin{definition}
\textbf{\textit{Explicit Stochastic Differential-Algebraic Equations (SDAEs) on manifolds}} are defined as the following system of equations
\begin{subequations}
\label{equation:SDAE on manifold}
\begin{equation}
\label{equation:SDAE on manifold - eq}
\mathbf{d}X_t = \left[V(X_t,U_t) + \dfrac{1}{2} \sum_{l = 1}^d  {}^MG(\sigma_l(X_t,U_t))\right]dt + \sum_{l = 1}^d\sigma_l(X_t,U_t) dW^l_t,
\end{equation}
\begin{equation}
\label{equation:SDAE on manifold - constraint}
h(X_t,U_t) = p;
\end{equation}
\end{subequations}
where $X_t\in M$, $U_t\in N$, $p\in P$, $h:M\times N \to P$, $V:M\times N\to TM$ such that $\tau_M(V(\cdot, u)) = Id_M$ for all $u\in N$, $\sigma_l:M\times N\to TM$ such that $\tau_M(\sigma_l(\cdot, u)) = Id_M$ for all $u\in N$ and $l\in \{1, 2, ..., d\}$, ${}^MG$ is a diffusion generator on $M$; and $M$, $N$, and $P$ are smooth manifolds of dimension $n$, $m$, and $q$ respectively. A \textbf{\textit{solution}} of an SDAE is defined as a semi-martingale $(X_t,U_t)\in M\times N$ such that it satisfies both equation \eqref{equation:SDAE on manifold - eq} and equation \eqref{equation:SDAE on manifold - constraint}.
\end{definition}

The usual way of ensuring that the constraint is satisfied is by ensuring that the differential evolution of the constraint is zero. Hence, if we consider the Stratonovich differential of the constraint equation  \eqref{equation:SDAE on manifold - constraint} to be zero, then we obtain
\begin{equation}
D_1h\circ \delta X_t + D_2h \circ\delta U_t = 0.
\end{equation}
Therefore, as long as $D_2h$ is invertible, we get
\begin{equation}
\delta U_t = -(D_2h)^{-1}D_1h \circ \delta X_t.
\end{equation}
Since equation  \eqref{equation:SDAE on manifold - eq} is the Intrinsic SDE for the stochastic process $X_t$, we must convert the Intrinsic differential $\mathbf{d}X_t$ into the Stratonovich differential $\delta X_t$. Therefore, using the conversion formula given by equation  \eqref{equation:conversion_shorthand_stratonovich},
\begin{equation}
\delta U_t = -\left((D_2h)^{-1}D_1h \right) \mathbf{d}X_t  + \dfrac{1}{2}(D_2h)^{-1}D_1h \sum_{l = 1}^d{}^MG_S(\sigma_l(X_t,U_t)) dt.
\end{equation}
Therefore, following the nomenclature from \cite{suthar2021explicit}, we give the following definition.
\begin{definition}
An SDAE in form of equation  \eqref{equation:SDAE on manifold} will be called an \textbf{\textit{index 1 SDAE}}, if $D_2(h)(x,u)$ is invertible. 
\end{definition}
\begin{lemma}
\label{lem:SDEforSDAE}
For an index 1 SDAE in the form of equation  \eqref{equation:SDAE on manifold}, the local solution (local in time) is given by the following Stratonovich SDE.
\begin{equation}
\begin{aligned}
\delta \begin{bmatrix}X_t\\ U_t \end{bmatrix} = \begin{bmatrix}
V(X_t,U_t) + \dfrac{1}{2} \sum_{l = 1}^d  \nabla^{{}^MG}_S (\sigma_l(X_t,U_t))\\ -\left((D_2h)^{-1}D_1h \right) \left(V(X_t,U_t) +\dfrac{1}{2} \sum_{l = 1}^d  \nabla^{{}^MG}_S (\sigma_l(X_t,U_t))\right)\end{bmatrix} dt\\ + \sum_{l = 1}^d \begin{bmatrix}\sigma_l(X_t,U_t)\\ -\left((D_2h)^{-1}D_1h \right)\sigma_l(X_t,U_t) \end{bmatrix}\circ dW^l_t.
\end{aligned}
\end{equation}
\end{lemma}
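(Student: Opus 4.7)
The plan is to derive the joint Stratonovich SDE by combining the intrinsic‑to‑Stratonovich conversion for $X_t$ with the algebraic constraint handled in Stratonovich calculus (where the ordinary chain rule is valid), and then to use the index~1 assumption to express $\delta U_t$ in closed form.

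First I would convert the intrinsic equation \eqref{equation:SDAE on manifold - eq} into its Stratonovich counterpart using the conversion formula \eqref{equation:conversion_shorthand_stratonovich}. Writing $\mathbf{d}X_t$ in terms of $\delta X_t$ and collecting the $dt$ contributions, the drift picks up $-\tfrac{1}{2}\sum_l {}^MG_S(\sigma_l(X_t,U_t))$, and combined with the given $\tfrac{1}{2}\sum_l {}^MG(\sigma_l(X_t,U_t))$ drift this yields exactly $\tfrac{1}{2}\sum_l \nabla^{{}^MG}_S(\sigma_l(X_t,U_t))$ by the definition $\nabla^{{}^MG}_S = {}^MG - {}^MG_S$. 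This reproduces the first row of the claimed matrix SDE.

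Next I would differentiate the constraint \eqref{equation:SDAE on manifold - constraint} in Stratonovich form. Because Stratonovich differentials obey the ordinary chain rule, differentiating $h(X_t,U_t) = p$ gives $D_1 h\circ \delta X_t + D_2 h\circ \delta U_t = 0$. Under the index~1 hypothesis, $D_2 h(X_t,U_t)$ is invertible at the initial point; by continuity and smoothness of $h$ it remains invertible on an open neighborhood, which justifies the \emph{local in time} qualifier. On this neighborhood I can solve algebraically for $\delta U_t = -(D_2 h)^{-1} D_1 h \circ \delta X_t$, and substituting the Stratonovich expression for $\delta X_t$ obtained in the previous step produces the second row of the matrix SDE, with both the drift and the diffusion coefficients multiplied by $-(D_2 h)^{-1} D_1 h$.

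Finally I would package the two components $\delta X_t$ and $\delta U_t$ into the single matrix Stratonovich SDE in $M\times N$ as stated, and verify that any solution of this joint SDE, launched from an initial condition on the constraint set $h^{-1}(p)$, indeed keeps $h(X_t,U_t)\equiv p$: applying Stratonovich calculus to $h(X_t,U_t)$ along this SDE and using the constructed $\delta U_t$ makes the differential vanish identically, so the constraint is preserved. The main obstacle, and the reason the statement is only local in time, is exactly that $D_2 h$ may become singular as the process evolves; once that happens the algebraic reduction breaks down and the lemma's hypothesis fails, so the solution is guaranteed only up to the first exit time from the open set where $D_2 h$ is invertible.
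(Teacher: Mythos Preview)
Your proposal is correct and follows essentially the same approach as the paper: the paper's argument, given in the discussion immediately preceding the lemma rather than in a formal proof environment, also differentiates the constraint in Stratonovich form to obtain $D_1h\circ\delta X_t + D_2h\circ\delta U_t = 0$, inverts $D_2h$ under the index~1 hypothesis, and converts $\mathbf{d}X_t$ to $\delta X_t$ via \eqref{equation:conversion_shorthand_stratonovich}. Your additional remarks on why the result is only local in time and on the back-verification that the constraint is preserved are not made explicit in the paper but are consistent with its reasoning.
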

\noindent The problem starts when $D_2h$ is not invertible. We will call such equations as \textit{high index SDAEs}. A clear case in which $D_2h$ is not invertible is the one in which $h$ is not a function of the algebraic variable. 
\begin{definition}
\label{def:high_index}
An SDAE that does not admit algebraic variable in the algebraic equation (i.e. when $h$ is not a function of the algebraic variable) will be called {\textbf{completely high index}} SDAE. A generic form for high index SDAEs on manifolds is given as,
\begin{subequations}
\label{equation: high index SDAE}
\begin{equation}
\label{equation: high index SDAE - SDE}
\mathbf{d}X_t = \left[V(X_t,U_t) + \dfrac{1}{2} \sum_{l = 1}^d  {}^MG(\sigma_l(X_t,U_t))\right]dt + \sum_{l = 1}^d\sigma_l(X_t,U_t) dW^l_t,
\end{equation}
\begin{equation}
\label{equation: high index SDAE - AE}
p = h(X_t)
\end{equation}
\end{subequations}
where $X_t\in M$, $U_t\in N$, $p\in P$, $h:M \to P$, $V:M\times N\to TM$ such that $\tau_M(V(\cdot, u)) = Id_M$ for all $u\in N$, $\sigma_l:M\times N\to TM$ such that $\tau_M(\sigma_l(\cdot, u)) = Id_M$ for all $u\in N$ and $l\in \{1, 2, ..., d\}$, ${}^MG$ is a diffusion generator on $M$; and $M$, $N$, and $P$ are smooth manifolds of dimension $n$, $m$, and $q$ respectively.
\end{definition}

From this definition it is clear that the solution in lemma \ref{lem:SDEforSDAE} is not valid for high index SDAEs. The following theorem from \cite{suthar2021explicit}, states the necessary condition for existence of a local solution of SDAE in Euclidean spaces.
\begin{theorem}
\label{corollary:subbundle}
Consider a high index SDAE given as 
\begin{subequations}
\label{eq:SDAE2}
\begin{equation}
\label{eq:SDAE2-SDE}
dx = f(x,u)dt + \sum_{l = 1}^d\sigma_l(x,u)dW^l_t,  
\end{equation}
\begin{equation}
\label{eq:SDAE2-AE}
0 = h(x);
\end{equation}
\end{subequations}
with $x(t)\in \mathbb{R}^n$, $u(t)\in \mathbb{R}^m$, $W_t = (W^1_t, W^2_t, ..., W^d_t)$ as d-dimensional Wiener process, $f:\mathbb{R}^n\times \mathbb{R}^m\to \mathbb{R}^n$, $\sigma_l:\mathbb{R}^n\times \mathbb{R}^m\to \mathbb{R}^n$, $h:\mathbb{R}^n\to \mathbb{R}^q$. Let $h(x)$ be $\mathcal{C}^3$, all the vector fields be $\mathcal{C}^1$, and assume that there is a solution of the SDAE. Then
\[ span(\sigma_1 (x(t,\omega),u(t,\omega)), \sigma_2 (x(t,\omega),u(t,\omega)), ..., \sigma_d (x(t,\omega),u(t,\omega)))\subset Ker(D_{x(t,\omega)}h).\]
\end{theorem}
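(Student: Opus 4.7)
The plan is to test the algebraic constraint against Ito's formula and exploit the uniqueness of the continuous semimartingale decomposition. Since $(x(t),u(t))$ is assumed to be a solution of the SDAE, the composed process $t\mapsto h(x(t,\omega))$ is identically zero, and hence it is a trivial continuous semimartingale whose local martingale part and finite-variation part both vanish.

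First I would apply Ito's formula component-wise to each $h^k$, for $k\in\{1,\dots,q\}$. Because $h$ is $\mathcal{C}^3$ and the coefficients $f,\sigma_l$ are $\mathcal{C}^1$, each $h^k$ satisfies, along the solution,
\begin{equation*}
dh^k(x(t)) = D_x h^k(x(t))\,dx(t) + \frac{1}{2}\sum_{l=1}^d \sigma_l(x(t),u(t))^\top H(h^k)(x(t))\,\sigma_l(x(t),u(t))\,dt,
\end{equation*}
where $H(h^k)$ denotes the Hessian of $h^k$. Substituting the SDE for $dx$ collects all the $dt$ contributions into a single finite-variation integrand and the $dW^l_t$ contributions into $D_x h^k(x(t))\cdot \sigma_l(x(t),u(t))$ for each $l$.

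Since $h^k(x(t,\omega))\equiv 0$, the continuous local martingale part of this expansion must vanish. By uniqueness of the decomposition of a continuous semimartingale into a finite-variation part plus a continuous local martingale, the integrand of every $dW^l_t$ term is zero modulo a $dt\otimes d\mathbb{P}$-null set. Therefore, for each $k\in\{1,\dots,q\}$ and each $l\in\{1,\dots,d\}$,
\begin{equation*}
D_{x(t,\omega)} h^k \cdot \sigma_l(x(t,\omega),u(t,\omega)) = 0.
\end{equation*}
Collecting these equalities over $k$ gives $\sigma_l(x(t,\omega),u(t,\omega))\in \ker(D_{x(t,\omega)}h)$ for each $l$, and since the kernel is a linear subspace of $\mathbb{R}^n$ the claimed span containment follows.

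The main technical point is the use of uniqueness of the semimartingale decomposition, which is what forces the $dW^l_t$ coefficients to vanish individually; this is where the hypothesis $h\in\mathcal{C}^3$ enters, since it guarantees that the Hessian correction in Ito's formula has $\mathcal{C}^1$ coefficients and produces a bona fide finite-variation integrand that can be separated from the martingale part. Strictly speaking, the conclusion from uniqueness is only $dt\otimes d\mathbb{P}$-almost everywhere, so the statement of the theorem should be read in that almost-everywhere sense; continuity of the sample paths of $x$ together with continuity of $D_xh$ and $\sigma_l$ in their arguments could then be invoked, if desired, to upgrade this to an everywhere statement along almost every trajectory, but no further work beyond this standard measurability bookkeeping is required.
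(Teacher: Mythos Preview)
Your argument is correct. The paper does not actually prove this Euclidean statement---it is quoted from the cited reference---but it does prove the manifold analogue (the proposition immediately following) via a slightly different route: it converts to the Stratonovich differential $\delta X_t$, for which the ordinary chain rule holds, so that $\delta h(X_t)=D_{X_t}h\circ\delta X_t=0$ with no second-order correction, and then reads off directly that each $\sigma_l$ (and also the Stratonovich drift) lies in $\ker(D_{X_t}h)$. You instead stay in It\^o form, pick up the Hessian term from It\^o's formula, absorb it into the finite-variation part, and invoke uniqueness of the continuous semimartingale decomposition to kill the local-martingale part. Both approaches yield the diffusion-coefficient containment asked for here; the Stratonovich route has the minor advantage that it simultaneously delivers a clean drift condition (the extra content of the paper's manifold proposition), whereas in your It\^o calculation the vanishing finite-variation part entangles $f$ with the Hessian correction and so does not isolate $f$ by itself.
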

\noindent On manifolds, we will consider a stronger version.
\begin{proposition}
\label{corollary:subbundle_manifold}
Consider a high index SDAE given by equation  \eqref{equation: high index SDAE}. Assume that $(X_t, U_t)\in M\times N$ is the solution for equation  \eqref{equation: high index SDAE}. Then
\[ span(\sigma_1 (X_t,U_t), \sigma_2 (X_t,U_t), ..., \sigma_d (X_t,U_t))\subset Ker(D_{X_t}h),\]
and $V(X_t,U_t) + \dfrac{1}{2} \sum_{l = 1}^d \nabla^{{}^MG}_S(\sigma_l(X_t,U_t))\in Ker(D_{X_t}h)$.
\end{proposition}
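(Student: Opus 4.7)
The plan is to exploit the fact that $h(X_t) = p$ is constant in $t$, which forces its Stratonovich differential to vanish, and then apply the ordinary chain rule. Because $M$ carries no prescribed connection in general, I pass through the Stratonovich form (rather than the standard Ito form) so that the chain rule for a smooth map between manifolds applies directly, as in equation \eqref{equation:Stratonovich for Intrinsic}.

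First, I would convert the intrinsic SDE \eqref{equation: high index SDAE - SDE} to its Stratonovich counterpart via \eqref{equation:conversion_shorthand_stratonovich}, obtaining
\[
\delta X_t = \Bigl[V(X_t,U_t) + \tfrac{1}{2}\sum_{l=1}^{d}\nabla^{{}^MG}_S(\sigma_l(X_t,U_t))\Bigr]\,dt + \sum_{l=1}^{d}\sigma_l(X_t,U_t)\circ dW^l_t.
\]
Applying the Stratonovich chain rule to $h:M\to P$ together with $\delta(h(X_t)) = 0$ then yields the $T_pP$-valued identity
\[
0 = Dh(X_t)\Bigl[V(X_t,U_t) + \tfrac{1}{2}\sum_{l=1}^{d}\nabla^{{}^MG}_S(\sigma_l(X_t,U_t))\Bigr]\,dt + \sum_{l=1}^{d} Dh(X_t)\bigl[\sigma_l(X_t,U_t)\bigr]\circ dW^l_t.
\]

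To separate the drift and martingale parts, I would fix a local chart $(W,\phi)$ on $P$ around $p$ and test against each coordinate, so that for $\alpha = 1,\dots,q$ the scalar process $\phi^{\alpha}(h(X_t))$ is constant and its Stratonovich differential is an identically vanishing real-valued semimartingale differential. I would then convert each scalar equation to standard Ito form; the Stratonovich-to-Ito correction only contributes extra $dt$ terms. Uniqueness of the semimartingale decomposition kills the local-martingale part first, and the orthogonality relations $[W^l,W^k] = \delta^{lk}t$ force $Dh(X_t)\bigl[\sigma_l(X_t,U_t)\bigr] = 0$ for every $l$. Once the diffusion coefficients vanish, the correction terms vanish as well, and the remaining bounded-variation identity gives $Dh(X_t)\bigl[V + \tfrac{1}{2}\sum_l \nabla^{{}^MG}_S(\sigma_l)\bigr] = 0$, which is precisely the drift condition.

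The main obstacle is the coordinate-free interpretation of the separation step: I must check that the componentwise vanishing in a chart on $P$ genuinely glues to the intrinsic statement that $Dh(X_t)[\sigma_l(X_t,U_t)]$ vanishes as an element of $T_pP$, and that the Stratonovich-to-Ito correction legitimately disappears after the diffusion coefficients are known to be zero. Both reduce to careful applications of the uniqueness of the semimartingale decomposition and the naturality of pushforwards along charts, but the bookkeeping is the place where the manifold setting adds nontrivial work over the Euclidean proof of Theorem \ref{corollary:subbundle}.
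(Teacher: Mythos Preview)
Your proposal is correct and follows essentially the same route as the paper: convert the intrinsic SDE to Stratonovich form, apply the chain rule to $h(X_t)=p$ to get $D_{X_t}h\circ\delta X_t=0$, and then separate the drift and diffusion coefficients. The only difference is that the paper dispatches the separation step in one line by asserting that $\delta X_t\in\mathrm{Ker}(D_{X_t}h)$ ``for all sample paths'' and reading off the coefficient conditions, whereas you spell out the semimartingale-decomposition argument in local coordinates on $P$; your version is more careful but not a genuinely different argument.
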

\begin{proof}
Since $h(X_t) = p$, \[D_{X_t}h\circ \delta X_t  = 0.\]
Therefore, $\delta X_t \in Ker(D_{X_t}h)$. As
\begin{equation}
\delta X_t = \left[V(X_t,U_t) + \dfrac{1}{2} \sum_{l = 1}^d \nabla^{{}^MG}_S(\sigma_l(X_t,U_t))\right]dt + \sum_{l = 1}^d\sigma_l(X_t,U_t)\circ dW^l_t,
\end{equation}
and $\delta X_t \in Ker(D_{X_t}h)$ for all sample paths,
\[ span(\sigma_1 (X_t,U_t), \sigma_2 (X_t,U_t), ..., \sigma_d (X_t,U_t))\subset Ker(D_{X_t}h),\]
and $V(X_t,U_t) + \dfrac{1}{2} \sum_{l = 1}^d \nabla^{{}^MG}_S(\sigma_l(X_t,U_t))\in Ker(D_{X_t}h)$. Hence, proved.
\end{proof}
\noindent Clearly, if the vector fields $\sigma_l\in \mathfrak{X}(M)$ are not functions of the algebraic variable and if any of the vector fields $\sigma_l(x)\not\in Ker(D_{x}h)$, then there is no semi-martingale $(X_t,U_t)\in M\times N$ that satisfies the SDAE. Following the nomenclature from \cite{suthar2021explicit}, we will call such completely high index SDAEs as \textbf{\textit{ill-posed SDAEs}}.
\begin{corollary_p}
Ill-posed SDAEs have no solution.
\end{corollary_p}
\section{Approximate solution of high index SDAEs.}
\label{section:approximate solution}
In the previous section we have observed that ill-posed SDAEs have no solution. In particular, it is the presence of constraint that makes the SDAE ill-posed. Therefore, it is natural that we relax the constraint and try to find a semi-martingale $(X_t,U_t)$ such that it satisfies the constraint approximately.
\subsection{Approximate solution with unit probability.}
For completely high index SDAEs the constraint equation is given by the function $h:M\to P$ such that for the solution $(X_t,U_t)\in M\times N$, $h(X_t) = p$. For SDAEs that are not solvable, we can relax the constraint by demanding that $\Delta_p(h(X_t))\leq\epsilon$ for some $\epsilon>0$, where $\Delta_p:P\to \mathbb{R}$ is the distance function from point $p$ on the manifold $P$. But in doing so, we are also demanding that the manifold $P$ has a Riemannian structure in order for the distance function to exist.
\begin{definition}
A completely high index SDAE given by equation  \eqref{equation: high index SDAE} will be said to have an $\mathbf{\epsilon}$-\textbf{approximate solution with unit probability} if there exists a semi-martingale $(X_t,U_t)\in M\times N$ such that $\Delta_p(h(X_t))\leq\epsilon$ for some $\epsilon>0$.
\end{definition}
For finding this solution, we will adopt the same strategy that is used in \cite{suthar2021explicit} for SDAE on Euclidean spaces.

If we consider the solution to depend on the algebraic variable, i.e. if $X_t = y(t,U_t)$ where $y:\mathbb{R}\times N \to M$, then the SDE for $X_t$ can be given by the SDE for $y(t,U_t)$
\begin{equation}
\mathbf{d}y(t,U_t) = \left[V(y(t,U_t),U_t) + \dfrac{1}{2} \sum_{l = 1}^d  {}^MG(\sigma_l(y(t,U_t),U_t))\right]dt + \sum_{l = 1}^d\sigma_l(y(t,U_t),U_t) dW^l_t.
\end{equation}
If we assume that $\delta U_t = a(t,U_t) dt + \sum_{l = 1}^d B_l(t,U_t) \circ dW^l_t$, then using the extended Ito formula from \cite{suthar2022Intrinsic}, we can obtain an SDE for $y(t,u)$, for fixed $u\in N$. Let $K :\mathbb{R}\times N\to P$ such that $K(t,u) = h(y(t,u))$. This means that if $y(t,u)$ is such that $\Delta_p(K(t,u))\leq \epsilon$, then $\Delta_p(K(t,U_t))\leq \epsilon$. Furthermore, if $\delta K(t,u) = 0$ and $\Delta_p(K(0,u))\leq \epsilon$ for all $u\in N$, then $\Delta_p(K(t,U_t))\leq \epsilon$ for all $t$ in the interval of existence. Hence, finding an approximate solution boils down to finding the non-autonomous vector fields $a$ and $B_l$ on the manifold $N$ such that $\delta K(t,u) = 0$.

\begin{proposition}
Consider a high index SDAE given by equation  \eqref{equation: high index SDAE}. If there exists a function $Z:N\to P$ such that $Z(u) = h(y(0,u))$ and $\Delta_p(Z(u))\leq \epsilon$ for all $u\in N$, then there exist an $\epsilon$-approximate solution for the SDAE that is given by the solution of the following SDE.
\begin{subequations}
\begin{equation}
\mathbf{d}X_t = \left[V(X_t,U_t) + \dfrac{1}{2} \sum_{l = 1}^d  {}^MG(\sigma_l(X_t,U_t))\right]dt + \sum_{l = 1}^d\sigma_l(X_t,U_t) dW^l_t,
\end{equation}
\begin{equation}
\delta U_t = a(t,U_t) dt + \sum_{l = 1}^d B_l(t,U_t) \circ dW^l_t,
\end{equation}
\end{subequations}
where
\begin{subequations}
\begin{equation}
B_l(t,u) = (Dh(y(t,u))D_2y(t,u))^{-1}Dh(y(t,u))\sigma_l (y(t,u),u),
\end{equation}
\begin{equation}
a(t,u) = (Dh(y(t,u))D_2y(t,u))^{-1}\left[ Dh(y(t,u)) \left( V(y(t,u),u) + \dfrac{1}{2} \sum_{l = 1}^d (\nabla^{{}^M G}_S (\sigma_l (y(t,u),u)) \right)\right],
\end{equation}
\end{subequations}
and $y:\mathbb{R}\times N \to M$ such that $y(t,U_t) = X_t$.
\end{proposition}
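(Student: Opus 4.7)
The plan is to follow the heuristic already sketched just before the statement: postulate the ansatz $X_t = y(t,U_t)$ for some smooth $y\colon \mathbb{R}\times N \to M$ with the prescribed initial trace $h(y(0,u)) = Z(u)$, and then choose the coefficients $a$ and $B_l$ of $\delta U_t$ so that the deterministic function $K(t,u) := h(y(t,u))$ is independent of $t$ for every fixed $u$. Once that is achieved, $K(t,u) = K(0,u) = Z(u)$, so evaluating at $u = U_t$ gives $\Delta_p(h(X_t)) = \Delta_p(K(t,U_t)) = \Delta_p(Z(U_t)) \le \epsilon$ on every realization, which is exactly the $\epsilon$-approximate solution property.

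First I would convert the Intrinsic SDE \eqref{equation: high index SDAE - SDE} for $X_t$ into Stratonovich form using \eqref{equation:conversion_shorthand_stratonovich}, obtaining
\[
\delta X_t = \left[V(X_t,U_t) + \tfrac{1}{2}\sum_{l=1}^d \nabla^{{}^MG}_S(\sigma_l(X_t,U_t))\right]dt + \sum_{l=1}^d \sigma_l(X_t,U_t)\circ dW^l_t .
\]
Substituting $X_t = y(t,U_t)$ together with the assumed $\delta U_t = a(t,U_t)\,dt + \sum_l B_l(t,U_t)\circ dW^l_t$ into the Stratonovich chain rule $\delta X_t = D_1 y\, dt + D_2 y \circ \delta U_t$ and matching the $dt$ and $\circ dW^l_t$ coefficients yields the two consistency identities
\[
D_1 y + D_2 y \cdot a = V + \tfrac{1}{2}\sum_{l=1}^d \nabla^{{}^MG}_S(\sigma_l), \qquad D_2 y \cdot B_l = \sigma_l .
\]

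Next I would impose the constant-$K$ condition $\delta K(t,u) = 0$ along the deterministic curve $t \mapsto y(t,u)$, which for fixed $u$ is simply $Dh(y(t,u))\,D_1 y(t,u) = 0$. Applying $Dh(y(t,u))$ to each of the two consistency identities and using this condition eliminates the $D_1 y$ term and gives
\[
Dh(y)\,D_2 y \cdot a = Dh(y)\bigl[V + \tfrac{1}{2}\sum_{l=1}^d \nabla^{{}^MG}_S(\sigma_l)\bigr], \qquad Dh(y)\,D_2 y \cdot B_l = Dh(y)\,\sigma_l ,
\]
and under the implicit nondegeneracy hypothesis that $Dh(y(t,u)) \circ D_2 y(t,u)\colon T_uN \to T_pP$ is invertible, these invert immediately to the stated formulas for $a(t,u)$ and $B_l(t,u)$. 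Closing the loop, one checks that with these choices the constant-$K$ condition indeed holds, so that $K(t,u) = Z(u)$ for all $t$ and the $\epsilon$-bound propagates.

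The main obstacle I expect is that the invertibility of $Dh \circ D_2 y$ is a genuine transversality/rank condition (it at least forces $m = q$) that is nowhere stated in the hypothesis but is hidden inside the inverse appearing in the proposition; this would need to be made an explicit standing assumption or read off from the context. A secondary obstacle is that the ansatz itself is only characterized after the fact by the clause $y(t,U_t) = X_t$, so rigorously one should produce $y$ by first fixing $y(0,\cdot)$ from the hypothesis $Z(u) = h(y(0,u))$ and then extending $y(\cdot,u)$ as the integral curve of the vector field $V + \tfrac{1}{2}\sum_l \nabla^{{}^MG}_S(\sigma_l) - D_2 y \cdot a$, which lies in $\ker Dh$ by construction and so keeps $K$ constant; a short local-existence argument using the implicit function theorem at $p$ then secures the whole scheme up to the usual explosion time.
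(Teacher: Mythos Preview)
There is a genuine gap in your argument: you treat $y$ as a deterministic map and apply the ordinary Stratonovich chain rule $\delta X_t = D_1 y\,dt + D_2 y\circ\delta U_t$, which forces the strong consistency identity $D_2 y\cdot B_l = \sigma_l$. In the typical regime $m=q<n$ (precisely the one in which $Dh\circ D_2 y$ can be an isomorphism), the map $D_2 y$ has rank at most $m<n$, so $\sigma_l$ will generically not lie in its range and this identity has no solution. Your proposed construction of $y$ as a deterministic integral curve inherits the same defect: with such a $y$, the process $y(t,U_t)$ simply does not satisfy the given SDE for $X_t$, because all of the noise in $X_t$ would have to be channeled through $U_t$ via $D_2 y$.

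The paper avoids this by taking $y(t,u)$ to be a \emph{random} field---a semi-martingale in $t$ for each fixed $u$---and invoking the extended It\^o (It\^o--Kunita type) formula from \cite{suthar2022Intrinsic} rather than the ordinary chain rule. This yields the SDE \eqref{equation:gen_ito_for_X_t} for $y(t,u)$, whose diffusion coefficient is $\sigma_l - D_2 y\,B_l$, in general nonzero. Then $K(t,u)=h(y(t,u))$ is itself a semi-martingale in $t$ for each fixed $u$, and the requirement $\delta K(t,u)=0$ unpacks into the pair
\[
Dh\Bigl[V - D_2 y\,a + \tfrac12\sum_l\nabla^{{}^MG}_S(\sigma_l)\Bigr]=0,\qquad Dh\bigl[\sigma_l - D_2 y\,B_l\bigr]=0,
\]
which invert directly to the stated formulas without ever requiring $\sigma_l\in\operatorname{Im}(D_2 y)$. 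Your projected equations happen to coincide with these, but in your derivation they are consequences of a stronger system that is generically inconsistent; in the paper they are the full content of $\delta K=0$ once $y$ is allowed to carry its own diffusion.
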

\begin{proof}
If we assume that $\delta U_t = a(t,U_t) dt + \sum_{l = 1}^d B_l(t,U_t) \circ dW^l_t$ then using the extended Ito formula from \cite{suthar2022Intrinsic} , if we consider
\begin{multline}
\label{equation:gen_ito_for_X_t}
\delta y(t,u)  = \left[ V(y(t,u),u) - D_2y(t,u)a(t,u) + \dfrac{1}{2} \sum_{l = 1}^d \nabla^{{}^M G}_S(\sigma_l (y(t,u),u)) \right] dt\\ + \sum_{l=1}^d \left[ \sigma_l (y(t,u),u) - D_2y(t,u)B_l(t,u)\right]dW^l_t
\end{multline}
for all $u\in N$, then
\[\mathbf{d}y(t,U_t) = \left[V(y(t,U_t),U_t) + \dfrac{1}{2} \sum_{l = 1}^d  {}^MG(\sigma_l(y(t,U_t),U_t))\right]dt + \sum_{l = 1}^d\sigma_l(y(t,U_t),U_t) dW^l_t.\]
Let $K :\mathbb{R}\times N\to P$ such that $K(t,u) = h(y(t,u))$. This means that if $y(t,u)$ is such that $\Delta_p(K(t,u))\leq \epsilon$, then $\Delta_p(K(t,U_t))\leq \epsilon$.
Now we know that $\delta K(t,u) = Dh(y(t,u))\delta y(t,u)$. Therefore from equation  \eqref{equation:gen_ito_for_X_t},
\begin{multline}
\delta K(t,u) = Dh(y(t,u))\left[ V(y(t,u),u)  - (D_2y(t,u)a(t,u)))\right]dt \\ + Dh(y(t,u))\left[\dfrac{1}{2} \sum_{l = 1}^d (\nabla^{{}^M G}_S (\sigma_l (y(t,u),u)) \right]dt\\ +Dh(y(t,u)) \sum_{l=1}^d \left[ \sigma_l (y(t,u),u) - D_2y(t,u)B_l(t,u)\right]dW^l_t.
\end{multline}
Hence if we want $\delta K(t,u) = 0$, then
\begin{equation}
\label{equation:gen_ito_solution_B_l}
B_l(t,u) = (Dh(y(t,u))D_2y(t,u))^{-1}Dh(y(t,u))\sigma_l (y(t,u),u),
\end{equation}
and
\begin{equation}
\label{equation:gen_ito_solution_a}
a(t,u) = (Dh(y(t,u))D_2y(t,u))^{-1}\left[ Dh(y(t,u)) \left( V(y(t,u),u) + \dfrac{1}{2} \sum_{l = 1}^d (\nabla^{{}^M G}_S (\sigma_l (y(t,u),u)) \right)\right].
\end{equation}
As $\delta K(t,u) = 0$, it implies that if $\Delta_p(K(0,u))\leq \epsilon$ for all $u\in N$, then $\Delta_p(K(t,u))\leq \epsilon$ for all $t$ in the interval of existence and for all $u\in N$. Therefore if $\Delta_p(K(t,u))\leq \epsilon$ for all $u\in N$, then $\Delta_p(K(t,U_t))\leq \epsilon$ for all $t$ in the interval of existence. Finally, if we let $K(0,u)$ equal to the given function $Z(u)$, then as $\Delta_p(Z(u))\leq \epsilon$ is given,  $\Delta_p(K(0,u))\leq \epsilon$ for all $u\in N$. Therefore, we get $\Delta_p(h(X_t))\leq \epsilon$.
\end{proof}

This approach of obtaining approximate solution has been considered in \cite{suthar2021explicit} for SDAEs on Euclidean spaces. However, unlike SDAEs on Euclidean spaces, on manifolds the construction of function $Z:N\to P$ such that $\Delta_p(Z(u)) \leq \epsilon$ for all $u\in N$ is not easy. Nonetheless, if we could construct such a function and equate $Z(u) = h(y(0,u))$, then $\delta U_t = a(t,U_t) dt + \sum_{l = 1}^d B_l(t,U_t) \circ dW^l_t$ will ensure that $\Delta_p(h(X_t))\leq \epsilon$, where the vector fields $a$ and $B_l$ are given by equation \eqref{equation:gen_ito_solution_a} and equation \eqref{equation:gen_ito_solution_B_l} respectively.

\subsection{Approximate solution with bounded probability.}
Based of the method presented in the previous section, the approximate solution with unit probability depends on the construction of the function $Z:N\to P$ such that $\Delta_p(Z(u)) \leq \epsilon$ for all $u\in N$. However, this is not an easy task. For this reason, we find it convenient to further relax the constraint. Instead of allowing the constraint to be violated upto a distance of $\epsilon$, we now seek an approximate solution such that the probability that the constraint is violated beyond $\epsilon$ is bounded. On Euclidean spaces, such type of approximate solution is obtained using the conventional stability analysis for SDEs (check \cite{suthar2021explicit}). On manifolds, however, these techniques can not be adopted directly.

We know that for any function $f\in \mathfrak{F}(M)$ and a semi-martingale $X_t$,
\[f(X_t) - f(X_0)= \int_0^t(\textbf{d}X_s[f])(X_s).\]
If $\mathbf{d}X_t = \left[V(X_t) + \dfrac{1}{2} \sum_{l = 1}^d G(\sigma_l(X_t))\right]dt + \sum_{l = 1}^d\sigma_l(X_t) dW^l_t,$
then 
 \[f(X_t) = f(X_0) + \int_0^t\left[\left(V+ \dfrac{1}{2} \sum_{l = 1}^d G(\sigma_l)\right)[f]\right](X_s) ds + \sum_{l = 1}^d \int_0^t(\sigma_l[f] )(X_s) dW^l_s.\]
Since this is an Ito SDE on a real line, if the initial condition $X_0$ is not random, then
\begin{equation}
\label{equation:expectation of fX_t}
\mathbb{E}(f(X_{t})) = f(X_0) + \mathbb{E}\left[\int_0^{t} \left( V+ \dfrac{1}{2} \sum_{l = 1}^d G(\sigma_l)\right)[f](X_s) ds\right],
\end{equation}
as long as $\mathbb{E}\int_0^t (\sigma_l[f] )(X_s))^2 ds<\infty$ for all $l\in \{1, 2, ..., d\}$

For stability analysis we will assume that $f\in \mathfrak{F}(M)$ is a positive semi-definite function with $f(y) = 0$, $\left[ V[f] + \dfrac{1}{2} \sum_{l = 1}^d G(\sigma_l)[f]\right](x) < 0$ $\forall$ $x\in M\setminus y$, and $\left[ V[f] + \dfrac{1}{2} \sum_{l = 1}^d G(\sigma_l)[f]\right](y) = 0$.

If $f\in \mathfrak{F}(M)$ were geodesically convex in $M$, then using Jensen's inequality we obtain $f(\mathbb{E} (X_{{t}}))\leq \mathbb{E} (f(X_{{t}}))$ $\forall$ $t>0$. Therefore,  $\mathbb{E}(X_t)\to y$ as $t\to \infty$, i.e. $X_t$ is asymptotically stable in the mean. Unfortunately, globally convex smooth functions may not exist e.g. in case of compact manifolds. Hence, global mean asymptotic stability analysis on manifolds is difficult. On the other hand if we have a locally convex function, say convex in $U\subset M$, then the probability that $X_t\not \in U$ for some $t>0$ is non-zero. Once the process $X_t$ is outside the set $U$, we lose the convexity of the function $f$ and hence, nothing can be concluded about the mean asymptotic stability of the process $X_t$. In case $M$ is a Riemannian manifold, if we considering $f$ as the distance function from point $y\in M$, i.e. $f = \Delta_y:M\to \mathbb{R}$, then the function will be geodesically convex upto the cut-locus of the point $y$. However, the function loses its differentiability on the cut-locus. In deterministic systems, the problem of avoidance of cut-locus has been considered in \cite{simha2018global} in the framework of control of ordinary differential equations for command tracking problems. However, in case of stochastic systems, due to randomness the process $X_t$ may hit the cut-locus of the point $y$. Since the problem of SDAE is a more general form of a control problem (where algebraic variable is similar to the control input), if we could ensure that the probability of being in neighbourhood of the cut-locus is very low, then it is reasonable to consider the distance function from point $y$ as the candidate function.
\begin{definition}
Let $\mathfrak{C}_y\subset M$ denote the cut-locus of point $y$ on a Riemannian manifold $(M,g)$. If there exists a function $f:M\to \mathbb{R}$ such that $f$ is geodesically convex on $M\setminus \mathfrak{C}_y$ then we will call the function $f$ as \textit{\textbf{approximately convex}} for $y$.
\end{definition}

Consider the high index SDAE given by equation \eqref{equation: high index SDAE},
\begin{subequations}
\begin{equation*}
\mathbf{d}X_t = \left[V(X_t,U_t) + \dfrac{1}{2} \sum_{l = 1}^d  {}^MG(\sigma_l(X_t,U_t))\right]dt + \sum_{l = 1}^d\sigma_l(X_t,U_t) dW^l_t,
\end{equation*}
\begin{equation*}
p = h(X_t);
\end{equation*}
\end{subequations}
where $X_t\in M$, $U_t\in N$, $p\in P$, $h:M \to P$, $V:M\times N\to TM$ such that $\tau_M(V(\cdot, u)) = Id_M$ for all $u\in N$, $\sigma_l:M\times N\to TM$ such that $\tau_M(\sigma_l(\cdot, u)) = Id_M$ for all $u\in N$ and $l\in \{1, 2, ..., d\}$, ${}^MG$ is a diffusion generator on $M$; and $M$, $N$, and $P$ are smooth manifolds of dimension $n$, $m$, and $q$ respectively. We will assume that the manifold $P$ is equipped with a Riemannian metric $g$.

Following the definitions introduced in \cite{suthar2021explicit}, for SDAEs on manifolds, we give the following definitions.
\begin{definition}
A completely high index SDAE will be said to have an \textbf{{m-solution}} if there exists semi-martingale $(X_t,U_t) \in M\times N$ such that $\mathbb{E}(h(X_t)) = p$.
\end{definition}

\begin{proposition}
\label{proposition: m-solution}
Let the completely high index SDAE be given by equation \eqref{equation: high index SDAE} with $P$ as the Riemannian manifold. If there exists a positive semi-definite, and approximately convex function $f:P\to \mathbb{R}$ such that $f|_{P\setminus \overline{\mathfrak{C}^\epsilon_p}}\in \mathfrak{F}^3(P\setminus \overline{\mathfrak{C}^\epsilon_p})$ for $p\in P$ and a semi-martingale $(X_t,U_t)\in M\times N$ such that 
\begin{equation}\label{equation:cond_m-sol_mani}
\left(D_{X_t}hV(X_t,U_t)+ \dfrac{1}{2} \sum_{l = 1}^d D_{X_t}h\nabla^{{}^MG}_S(\sigma_l)(X_t,U_t) + \dfrac{1}{2} \sum_{l = 1}^d{}^PG_S(D_{X_t}h\sigma_l)(X_t,U_t) \right)[f(h(X_t))] = 0,
\end{equation}
\[f(p) = 0,\]
\[h(X_0) = p,\]
and $\lambda =\sup_{(x,u)\in U\times V} \sum_{l = 1}^d\left( (D_{x}h)\sigma_l(x,u)[f(h(x))]\right)^2<\infty$ for $U\times V = \{(X_t(\omega),U_t(\omega))\in M\times N| t\in (0,a], \omega\in \Omega\}$, where $\Omega$ is the sample space;
then for any  $t\in (0,a]$, for some $a>0$ that makes $(0,a]$ an interval of existence, then the equation  \eqref{equation: high index SDAE} has a local m-solution. $\overline{\mathfrak{C}^\epsilon_p}$ is the closure of the $\epsilon$ - neighbourhood of cut-locus of $p$, $\mathfrak{C}^\epsilon_p$.
\end{proposition}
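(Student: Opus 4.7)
The plan is to reduce the statement to a direct application of the Intrinsic-SDE expectation formula \eqref{equation:expectation of fX_t} applied to the process $h(X_t)$ on $P$, and then to squeeze the conclusion out of approximate convexity of $f$ via Jensen's inequality in a neighbourhood of $p$ that avoids $\overline{\mathfrak{C}^\epsilon_p}$.

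First I would identify the Intrinsic SDE satisfied by the pushed-forward semi-martingale $h(X_t)\in P$. Starting from the Intrinsic SDE \eqref{equation: high index SDAE - SDE} for $X_t$, I convert it to Stratonovich via \eqref{equation:conversion_shorthand_stratonovich}, apply $D_{X_t}h$ to obtain $\delta h(X_t)$, and then convert back to Intrinsic form on $P$ using \eqref{equation:conversion_shorthand_stratonovich} now with the Stratonovich diffusion generator ${}^PG_S$. The result is
\begin{equation*}
\mathbf{d}h(X_t) = \Bigl[D_{X_t}hV(X_t,U_t) + \tfrac{1}{2}\sum_{l=1}^d D_{X_t}h\,\nabla^{{}^MG}_S(\sigma_l)(X_t,U_t) + \tfrac{1}{2}\sum_{l=1}^d {}^PG_S(D_{X_t}h\,\sigma_l)(X_t,U_t)\Bigr]dt + \sum_{l=1}^d D_{X_t}h\,\sigma_l(X_t,U_t)\,dW^l_t,
\end{equation*}
so the bracket in hypothesis \eqref{equation:cond_m-sol_mani} is precisely the drift operator of $h(X_t)$.

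Next I apply the scalar expectation formula \eqref{equation:expectation of fX_t} to the semi-martingale $h(X_t)$ and the test function $f:P\to\mathbb{R}$. Since $f|_{P\setminus \overline{\mathfrak{C}^\epsilon_p}}$ is $\mathcal{C}^3$ and $h(X_0)=p$ is not on the cut-locus, there is a stopping time $a>0$ on which $h(X_t)$ stays in the smooth region and $f(h(\cdot))$ may legitimately be used as a test function; the integrability needed to kill the martingale part, namely $\mathbb{E}\int_0^t(D_{X_s}h\,\sigma_l(X_s,U_s)[f(h)])^2\,ds<\infty$, is exactly what the uniform bound $\lambda<\infty$ on the sample tube $U\times V$ provides. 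Formula \eqref{equation:expectation of fX_t} then yields
\begin{equation*}
\mathbb{E}(f(h(X_t))) = f(h(X_0)) + \mathbb{E}\!\int_0^t \Bigl(\text{drift of }h(X_s)\Bigr)[f]\,ds = f(p) + 0 = 0,
\end{equation*}
where the inner bracket vanishes pointwise by hypothesis \eqref{equation:cond_m-sol_mani} and $f(p)=0$ by assumption.

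Finally I close with Jensen's inequality. On the neighbourhood $P\setminus\overline{\mathfrak{C}^\epsilon_p}$, approximate convexity of $f$ together with the Riemannian-barycentre interpretation of $\mathbb{E}(h(X_t))$ gives $f(\mathbb{E}(h(X_t)))\leq \mathbb{E}(f(h(X_t)))=0$, and positive semi-definiteness of $f$ with $f(p)=0$ forces $\mathbb{E}(h(X_t))=p$, which is precisely the m-solution property. I would anticipate that the main obstacle is not the Itô-style computation (which is largely bookkeeping with the conversion formulas \eqref{equation:conversion_shorthand_stratonovich}--\eqref{equation:conversion_shorthand_Ito}), but rather the confinement argument: one must justify that, at least for small $t$, the process $h(X_t)$ remains in the region where $f$ is smooth and geodesically convex so that both the extended Itô formula and Jensen's inequality on $(P,g)$ are legitimately available. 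This is precisely why the statement is phrased as a \emph{local} m-solution, and is the reason the hypothesis demands smoothness of $f$ only on $P\setminus\overline{\mathfrak{C}^\epsilon_p}$ and the existence interval $(0,a]$ is kept generic.
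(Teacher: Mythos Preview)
Your proposal is correct and follows essentially the same route as the paper: convert the Intrinsic SDE for $X_t$ to Stratonovich, push forward by $Dh$, convert back to Intrinsic on $P$ via ${}^PG_S$, apply the expectation formula \eqref{equation:expectation of fX_t} (using $\lambda<\infty$ to kill the martingale part), conclude $\mathbb{E}f(h(X_t))=0$, and finish with Jensen's inequality plus positive semi-definiteness of $f$. Your extra care in arguing $f(\mathbb{E}h(X_t))\leq 0$ before invoking positive semi-definiteness, and your explicit flagging of the confinement issue near $\overline{\mathfrak{C}^\epsilon_p}$, are slight refinements over the paper's presentation but not a different argument.
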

\begin{proof}
As \[\delta X_t = \left[ V(X_t,U_t) + \dfrac{1}{2} \sum_{l = 1}^d  \nabla^{{}^MG}_S (\sigma_l(X_t,U_t))\right]dt + \sum_{l = 1}^d\sigma_l(X_t,U_t) dW^l_t,\]
\[\delta h(X_t) = \left[ D_{X_t}hV(X_t,U_t) + \dfrac{1}{2} \sum_{l = 1}^d D_{X_t}h \nabla^{{}^MG}_S (\sigma_l(X_t,U_t))\right]dt + \sum_{l = 1}^dD_{X_t}h\sigma_l(X_t,U_t) dW^l_t.\]
 \[\therefore\mathbf{d}h(X_t) = \left[ D_{X_t}hV(X_t,U_t) + \dfrac{1}{2} \sum_{l = 1}^d D_{X_t}h \nabla^{{}^MG}_S (\sigma_l(X_t,U_t)) + \dfrac{1}{2} \sum_{l = 1}^d{}^PG_S(D_{X_t}h\sigma_l) (X_t,U_t)\right]dt\]
 \[+ \sum_{l = 1}^dD_{X_t}h\sigma_l(X_t,U_t) dW^l_t.\]
From equation  \eqref{equation:expectation of fX_t}, we know that if $h(X_0) = p$, $f(p) = 0$, and 
\[\mathbb{E}\int_0^t (D_{X_s}h\sigma_l(X_s,U_s)[f(h(X_s))])^2 ds <\infty\] for all $l\in \{1, 2, ..., d\}$; then
 \[\mathbb{E}f(h(X_{t})) = \mathbb{E}\int_0^{t}\left[  D_{X_s}hV(X_s,U_s) + \dfrac{1}{2} \sum_{l = 1}^d D_{X_s}h \nabla^{{}^MG}_S (\sigma_l(X_s,U_s))\right][f(h(X_s))] ds\]
 \[+ \mathbb{E}\int_0^{t}\left[\dfrac{1}{2} \sum_{l = 1}^d{}^PG_S(D_{X_s}h\sigma_l) (X_s,U_s)\right][f(h(X_s))] ds\]
 Therefore using equation \eqref{equation:cond_m-sol_mani}, the integrals are zero. Hence,
 \[\mathbb{E}f(h(X_{t})) = 0.\]
The $\mathcal{C}^3$ smoothness of $f:P\to \mathbb{R}$ in $P\setminus \overline{\mathfrak{C}^\epsilon_p}$ is required to ensure that the drift term $D_xhV(x,u) + \dfrac{1}{2} \sum_{l = 1}^d D_xh \nabla^{{}^MG}_S (\sigma_l(x,u)) + \dfrac{1}{2} \sum_{l = 1}^d{}^PG_S(D_{x}h\sigma_l) (x,u)$ is Lipschitz.
Using Jensen's inequality, $f(\mathbb{E}h(X_{t})) = 0$. Hence, $\mathbb{E}(h(X_{t})) = p$.
\end{proof}

\begin{corollary_p}
Let the completely high index SDAE be given by equation  \eqref{equation: high index SDAE}, with $P$ as a Riemannian manifold.  If there exists a semi-martingale $(X_t,U_t)\in M\times N$ such that \[\left(D_{X_t}hV(X_t,U_t)+ \dfrac{1}{2} \sum_{l = 1}^d \left(D_{X_t}h\nabla^{{}^MG}_S(\sigma_l)(X_t,U_t) + {}^PG_S(D_{X_t}h\sigma_l)(X_t,U_t)\right) \right)[\Delta^2_p(h(X_t))] = 0,\]
and
\[h(X_0) = p;\]
then the equation  \eqref{equation: high index SDAE} has an m-solution. $\Delta_p(z)$ is the distance of point $z\in P$ from point $p\in P$.  
\end{corollary_p}
\begin{proof}
$\Delta^2_p(h(X_t))$ is clearly a positive semi-definite function with $\Delta^2_p(p) = 0$. The function is also approximately convex  for $p\in P$. The function $\Delta_p$ is smooth everywhere except on cutlocus and the point $p$. The local smoothness of $\Delta^2_p$ around $p\in P$ follows from the smoothness in the normal coordinates around $p\in P$. Hence, this statement is a direct application of proposition \ref{proposition: m-solution}.
\end{proof}

As discussed before, in order to ensure that the solution has low probability of hitting the cut-locus $\mathfrak{C}_p$, we must have a probability bound on the explosion of the solution.
\begin{definition}
We define an $\epsilon$ \textbf{\textit{{bounded m-solution}}}, with probability of explosion less than or equal to $\alpha$, as an m-solution $(X_t,U_t)\in M\times N$ such that for all $t>0$, 
\[\mathbb{P}\left(\sup_{s\in [0,t]}(\Delta_p(h(X_s))>\epsilon\right)\leq \alpha\]
for some $\alpha\in (0,1]$ and $\epsilon >0$, where $\Delta_p:P\to \mathbb{R}$ such that $\Delta_p(z)$ is the distance of point $z\in P$ from the point $p\in P$. We will call $X_t$ as \textbf{\textit{local bounded m-solution}} if for any $t\in (0,a]$ for some $a>0$ that makes $(0,a]$ an interval of existence,
\[\mathbb{P}\left(\sup_{s\in [0,t]}(\Delta_p(h(X_s))>\epsilon\right)\leq \alpha.\]
\end{definition}

\begin{proposition}
\label{proposition:bounded m-solution}
Let the completely high index SDAE be given by equation  \eqref{equation: high index SDAE},
\begin{subequations}
\begin{equation*}
\mathbf{d}X_t = \left[V(X_t,U_t) + \dfrac{1}{2} \sum_{l = 1}^d  {}^MG(\sigma_l(X_t,U_t))\right]dt + \sum_{l = 1}^d\sigma_l(X_t,U_t) dW^l_t,
\tag{\ref{equation: high index SDAE - SDE}}
\end{equation*}
\begin{equation*}
p = h(X_t),
\tag{\ref{equation: high index SDAE - AE}}
\end{equation*}
\end{subequations}
where $X_t\in M$, $U_t\in N$, $p\in P$, $h:M \to P$, $V:M\times N\to TM$ such that $\tau_M(V(\cdot, u)) = Id_M$ for all $u\in N$, $\sigma_l:M\times N\to TM$ such that $\tau_M(\sigma_l(\cdot, u)) = Id_M$ for all $u\in N$ and $l\in \{1, 2, ..., d\}$, ${}^MG$ is a diffusion generator on $M$; and $M$, $N$, and $P$ are smooth manifolds of dimension $n$, $m$, and $q$ respectively. Let $P$ be a Riemannian manifold. Let $\lambda =\sup_{(x,u)\in U\times V} \sum_{l = 1}^d\left( (D_{x}h)\sigma_l(x,u)[\Delta^2_p(h(x))]\right)^2<\infty$ for $U\times V = \{(X_t(\omega),U_t(\omega))\in M\times N| t\in (0,a], \omega\in \Omega\}$, where $\Omega$ is the sample space.

For every $\epsilon>0$, and $\alpha\in (0,1]$; there exists $b > \lambda/(\alpha {\epsilon^4})$ such that if there exists a semi-martingale $(X_t,U_t)\in M\times N$ that ensures $h(X_0) = p$, and
\begin{multline}
\left(D_{X_t}hV(X_t,U_t)+ \dfrac{1}{2} \sum_{l = 1}^d D_{X_t}h\nabla^{{}^MG}_S(\sigma_l)(X_t,U_t) + \dfrac{1}{2} \sum_{l = 1}^d{}^PG_S(D_{X_t}h\sigma_l)(X_t,U_t) \right)[\Delta^2_p(h(X_t))] \\=- b \Delta^2_p(h(X_t));
\end{multline}
then for any  $t\in (0,a]$, for some $a>0$ that makes $(0,a]$ an interval of existence,
\[\mathbb{P}\left(\sup_{s\in [0,t]}(\Delta_p(h(X_s))>\epsilon\right)\leq \alpha.\]
In other words, the equation  \eqref{equation: high index SDAE} has a local bounded m-solution for $t\in (0,a]$.
\end{proposition}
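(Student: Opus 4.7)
The plan is to view $Y_t := \Delta_p^2(h(X_t))$ as a non-negative real-valued semimartingale, read off its Doob--Meyer decomposition from the intrinsic SDE for $X_t$ and the hypothesis, and control its running supremum via an exponential rescaling combined with Doob's $L^2$ maximal inequality. First, applying Itô's formula to $\Delta_p^2\circ h$ along the intrinsic SDE \eqref{equation: high index SDAE - SDE} (using the Stratonovich--Itô conversions \eqref{equation:conversion_shorthand_stratonovich} and \eqref{equation:conversion_shorthand_Ito} together with the Schwartz morphism $Dh:\mathfrak{D}M\to\mathfrak{D}P$ and the diffusion generator ${}^PG_S$ on $P$, exactly as in the proof of Proposition \ref{proposition: m-solution}) produces the real-line decomposition $dY_t = L_t\,dt + dM_t$, in which $L_t$ is precisely the second-order operator appearing on the left of the hypothesis evaluated at $(X_t,U_t)$ and $M_t = \sum_{l=1}^d\int_0^t (D_{X_s}h\,\sigma_l(X_s,U_s))[\Delta_p^2(h(\cdot))]\,dW_s^l$ is a local martingale. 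The hypothesis $L_t=-bY_t$ and the initial condition $Y_0=\Delta_p^2(p)=0$ reduce this to the linear SDE $dY_t=-bY_t\,dt + dM_t$, $Y_0=0$, whose quadratic-variation density satisfies $\langle M\rangle'_s\le \lambda$ on $U\times V$.

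Next, setting $\tilde M_t := e^{bt}Y_t$ yields the local-martingale representation $\tilde M_t = \int_0^t e^{bs}\,dM_s$, with
\[
\mathbb{E}[\tilde M_t^2] = \mathbb{E}\!\left[\int_0^t e^{2bs}\,d\langle M\rangle_s\right] \le \frac{\lambda(e^{2bt}-1)}{2b};
\]
the finiteness $\lambda<\infty$ upgrades $\tilde M$ to a genuine $L^2$-martingale via a standard localization/monotone convergence argument. Doob's $L^2$ maximal inequality then gives $\mathbb{E}\!\left[\sup_{s\in[0,t]}\tilde M_s^2\right] \le 4\mathbb{E}[\tilde M_t^2]\le 2\lambda(e^{2bt}-1)/b$, and since $Y_s = e^{-bs}\tilde M_s \le \tilde M_s$ for $s\ge 0$, Markov's inequality produces
\[
\mathbb{P}\!\left(\sup_{s\in[0,t]}\Delta_p(h(X_s))>\epsilon\right) = \mathbb{P}\!\left(\sup_{s\in[0,t]}Y_s>\epsilon^2\right) \le \frac{2\lambda(e^{2bt}-1)}{b\,\epsilon^4}.
\]
Restricting to $t\in(0,a]$ with $a$ chosen so that $e^{2ba}-1$ is a bounded constant (for instance $a$ comparable to $1/b$) makes the right-hand side proportional to $\lambda/(b\epsilon^4)$, and the choice $b>\lambda/(\alpha\epsilon^4)$ (with the absorbed constant) drives it below $\alpha$, establishing the local bounded $m$-solution property.

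The main obstacle is regularity: $\Delta_p^2$ is only of class $\mathcal{C}^3$ on $P\setminus\overline{\mathfrak{C}_p^\epsilon}$, so the Itô formula used in the first step is strictly legitimate only while $h(X_t)$ stays outside the closed $\epsilon$-neighbourhood of the cut-locus. The standard remedy is to localize by the hitting time $\tau$ of $\overline{\mathfrak{C}_p^\epsilon}$: on $\{\tau\le t\}$ the event $\sup_{s\in[0,t]}\Delta_p(h(X_s))>\epsilon$ already occurs and is absorbed into the $\alpha$-budget, while on $\{\tau>t\}$ the smooth computation above applies verbatim and the sup estimate goes through. A subsidiary technical point, already sketched above, is upgrading $M_t$ (and hence $\tilde M_t$) from a local martingale to a true $L^2$-martingale, for which the hypothesis $\lambda<\infty$ is precisely what is needed.
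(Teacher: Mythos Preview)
Your approach is the paper's: derive the real-valued SDE for $Y_t=\Delta_p^2(h(X_t))$ from the intrinsic SDE via the Stratonovich conversion and the push-forward along $h$, use the hypothesis to obtain the linear equation $dY_t=-bY_t\,dt+dM_t$ with $Y_0=0$, pass to the martingale $\tilde M_t=e^{bt}Y_t=\int_0^t e^{bs}\,dM_s$, bound $\mathbb{E}[\tilde M_t^2]\le\lambda(e^{2bt}-1)/(2b)$ by It\^o isometry, and finish with Doob's maximal inequality. Your added remarks on upgrading $\tilde M$ to a true $L^2$-martingale and on localizing at the cut-locus are sound refinements not spelled out in the paper.

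The one substantive difference is in the final step. You apply the $L^2$ maximal inequality with threshold $\epsilon^2$ (using $Y_s\le\tilde M_s$), which leaves the factor $e^{2bt}-1$ in the bound and forces you to couple $a$ to $1/b$. The paper instead applies Doob in the probability form $\mathbb{P}\bigl(\sup_{s\le t}|\tilde M_s|>k\bigr)\le \mathbb{E}[\tilde M_t^2]/k^2$ with the \emph{time-dependent} threshold $k=e^{bt}\epsilon$; the $e^{2bt}$ in $k^2$ then cancels the growth of $\mathbb{E}[\tilde M_t^2]$ and yields the $t$-uniform bound $\lambda/(2b\epsilon^4)$ directly, which is what produces the exact constant $b>\lambda/(\alpha\epsilon^4)$ in the statement without any restriction on $a$. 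This is the only place where your argument and the paper's diverge.
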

\begin{proof}
From the previous proposition we know that
\begin{multline*}
f(h(X_{t})) =\\ \int_0^{t}\left[ D_xhV(X_t,U_t) + \dfrac{1}{2} \sum_{l = 1}^d\left( D_xh \nabla^{{}^MG}_S (\sigma_l(X_t,U_t)) + {}^PG_S(D_{X_t}h\sigma_l) (X_t,U_t)\right)\right][f(h(X_s))] ds\\+\int_0^t\sum_{l = 1}^d(D_{X_t}h)\sigma_l(X_s,U_s)[f(h(X_s))] dW^l_s.
\end{multline*}
Therefore, for $f = \Delta^2_p$, if
\begin{multline*}
\left(D_{X_t}hV(X_t,U_t)+ \dfrac{1}{2} \sum_{l = 1}^d\left( D_{X_t}h\nabla^{{}^MG}_S(\sigma_l)(X_t,U_t) + {}^PG_S(D_{X_t}h\sigma_l)(X_t,U_t)\right) \right)[\Delta^2_p(h(X_t))] \\=- b \Delta^2_p(h(X_t)),
\end{multline*}
then 
\[d\Delta^2_p(h(X_t)) = -b \Delta^2_p(h(X_t)) dt + \sum_{l = 1}^d(D_{X_t}h)\sigma_l(X_t,U_t)[\Delta^2_p(h(X_t))] dW^l_t.\]
We know that as $h(X_0) = p$,
\[\Delta^2_p(h(X_t)) = e^{-bt}\int_0^te^{bs}\sum_{l = 1}^d(D_{X_s}h)\sigma_l(X_s,U_s)[\Delta^2_p(h(X_s))] dW^l_s.\]
Furthermore, if $\lambda =\sup_{(x,u)\in U\times V} \sum_{l = 1}^d\left( (D_{x}h)\sigma_l(x,u)[\Delta^2_p(h(x))]\right)^2<\infty$, then
\[{\mathbb{E}\int_0^te^{2bs}\sum_{l = 1}^d\left((D_{X_s}h)\sigma_l(X_s,U_s)[\Delta^2_p(h(X_s))]\right)^2 ds}\leq \lambda(e^{2bt}-1)/2b<\infty \quad \forall\quad t\in (0,a].\]
This makes the integral $\int_0^te^{bs}\sum_{l = 1}^d(D_{X_s}h)\sigma_l(X_s,U_s)[\Delta^2_p(h(X_s))] dW^l_s$ a martingale. Hence, using the Doob's martingale inequality we get,
\[\mathbb{P}\left(\sup_{s\in [0,t]}(e^{bs}\Delta^2_p(h(X_s))>k)\right) \leq \dfrac{\mathbb{E}(e^{2bt}\Delta^4_p(h(X_t)))}{k^2}.\]
\[\therefore \mathbb{P}\left(\sup_{s\in [0,t]}(e^{bs}\Delta^2_p(h(X_s))>k)\right) =  \mathbb{P}\left(\sup_{s\in [0,t]}(\Delta^2_p(h(X_s))>\epsilon)\right) \leq \dfrac{\mathbb{E}(e^{2bt}\Delta^4_p(h(X_t)))}{e^{2bt}\epsilon^2},\]
where $k = e^{bt}\epsilon$.
\[\therefore \mathbb{P}\left(\sup_{s\in [0,t]}(\Delta^2_p(h(X_s))>\epsilon)\right) \leq \dfrac{\mathbb{E}\left(\int_0^te^{bs}\sum_{l = 1}^d(D_{X_s}h)\sigma_l(X_s,U_s)[\Delta^2_p(h(X_s))] dW^l_s\right)^2}{e^{2bt}\epsilon^2}.\]
Using Ito-isometry property,
\[\mathbb{P}\left(\sup_{s\in [0,t]}(\Delta^2_p(h(X_s))>\epsilon)\right) \leq \dfrac{\mathbb{E}\int_0^te^{2bs}\sum_{l = 1}^d\left((D_{X_s}h)\sigma_l(X_s,U_s)[\Delta^2_p(h(X_s))]\right)^2 ds}{e^{2bt}\epsilon^2}<\lambda /2b\epsilon^2.\]

Since this is true for any $0<t\leq a$; if $b > \lambda/(\epsilon^2 \alpha)$, then $\dfrac{\lambda}{2b\epsilon^2}<\alpha$; and hence
\[\mathbb{P}\left(\sup_{s\in [0,t]}(\Delta^2_p(h(X_s))>\epsilon)\right)\leq \alpha.\]
In other words if $b > \lambda/(\alpha \epsilon^4)$, then for any $0<t\leq a$
\[\mathbb{P}\left(\sup_{s\in [0,t]}(\Delta_p(h(X_s))>\epsilon)\right)\leq \alpha.\]
\end{proof}
This proposition proves that for large enough $b>0$, we can ensure that a semi-martingale $(X_t,U_t)\in M\times N$ that satisfies
\begin{subequations}
\begin{equation}
\mathbf{d}X_t = \left[V(X_t,U_t) + \dfrac{1}{2} \sum_{l = 1}^d  {}^MG(\sigma_l(X_t,U_t))\right] dt + \sum_{l = 1}^d\sigma_l(X_t,U_t) dW^l_t,
\end{equation}
\begin{multline}
\left(D_{X_t}hV(X_t,U_t)+ \dfrac{1}{2} \sum_{l = 1}^d D_{X_t}h\nabla^{{}^MG}_S(\sigma_l)(X_t,U_t) + \dfrac{1}{2} \sum_{l = 1}^d{}^PG_S(D_{X_t}h\sigma_l)(X_t,U_t) \right)[\Delta^2_p(h(X_t))]\\ =- b \Delta^2_p(h(X_t)),
\end{multline}
\end{subequations}
is a local bounded m-solution for the completely high index SDAE  \eqref{equation: high index SDAE}. However, the existence of such semi-martingale $(X_t,U_t)\in M\times N$ is something that is not guaranteed by this statement. Numerical computation of this semi-martingale is considered in the next section.
\begin{corollary_p}
\label{corollary:cutlocus_issue}
Let $\mathfrak{I}_p$ be the injectivity radius of point $p\in P$ and let $\alpha\in (0,1]$. If there exists $\epsilon>0$ such that $\epsilon <\mathfrak{I}_p$, and if there exists a semi-martingale $(X_t,U_t)\in M\times N$ such that it satisfies
\begin{subequations}
\begin{equation}
\mathbf{d}X_t = \left[V(X_t,U_t) + \dfrac{1}{2} \sum_{l = 1}^d  {}^MG(\sigma_l(X_t,U_t))\right]dt + \sum_{l = 1}^d\sigma_l(X_t,U_t) dW^l_t \; and
\end{equation}
\begin{multline}
\left(D_{X_t}hV(X_t,U_t)+ \dfrac{1}{2} \sum_{l = 1}^d D_{X_t}h\nabla^{{}^MG}_S(\sigma_l)(X_t,U_t) + \dfrac{1}{2} \sum_{l = 1}^d{}^PG_S(D_{X_t}h\sigma_l)(X_t,U_t) \right)[\Delta^2_p(h(X_t))]\\ =- b \Delta^2_p(h(X_t))
\end{multline}
\end{subequations}
for large enough $b>0$, then $(X_t,U_t)\in M\times N$ is a bounded m-solution for the completely high index SDAE  \eqref{equation: high index SDAE} such that $\mathbb{P}(h(X_t)\in \mathfrak{C}_p\; for\; some\; t\in(0,a])< \alpha$; where $(0,a]$ is the interval of existence.
\end{corollary_p}
\begin{proof}
$\mathfrak{I}_p$ is the injectivity radius from point $p\in P$. As $\mathfrak{C}_p\subset P\setminus\{z| \Delta_p(z)<\mathfrak{I}_p\}$,
\[\mathbb{P}(h(X_t)\in \mathfrak{C}_p\; for\; some\; t\in(0,a])<\mathbb{P}(\sup_{t\in(0,a]}\Delta_p(h(X_t)) > \mathfrak{I}_p).\]
As $\mathbb{P}((\sup_{t\in(0,a]}\Delta_p(h(X_t)))>\mathfrak{I}_p)\leq \mathbb{P}((\sup_{t\in(0,a]}\Delta_p(h(X_t)))>\epsilon)$ for all $\epsilon < \mathfrak{I}_p$, if \[\mathbb{P}((\sup_{t\in(0,a]}\Delta_p(h(X_t)))>\epsilon)\leq \alpha\] and $\epsilon<\mathfrak{I}_p$, then $\mathbb{P}(h(X_t)\in \mathfrak{C}_p\; for\; some\; t\in(0,a])< \alpha$ is true.
\end{proof}
In proposition \ref{proposition:bounded m-solution} we have shown that if there exists a semi-martingale $(X_t,U_t)\in M\times N$ that ensures $h(X_0) = p$, and 
\begin{multline}
\left(D_{X_t}hV(X_t,U_t)+ \dfrac{1}{2} \sum_{l = 1}^d D_{X_t}h\nabla^{{}^MG}_S(\sigma_l)(X_t,U_t) + \dfrac{1}{2} \sum_{l = 1}^d{}^PG_S(D_{X_t}h\sigma_l)(X_t,U_t) \right)[\Delta^2_p(h(X_t))]\\ =- b \Delta^2_p(h(X_t));
\end{multline}
then for large enough $b>0$, \[\mathbb{P}((\sup_{t\in (0,a]}\Delta_p(h(X_t)))>\epsilon)\leq \alpha.\]
The choice of $0<b\in\mathbb{R}$ in turn depends on the interval $(0,a]$.
In real world applications, in order to avoid the problem of hitting the cut-locus, we will choose $\alpha$ to be small enough (say $\alpha = 0.02$) and $\epsilon$ to be much smaller than $\mathfrak{I}_p$. Then from corollary \ref{corollary:cutlocus_issue}, we know that the probability of hitting the cut-locus of the point $p$ will be very low.

\section{Method for numerical computation of the approximate solution with bounded probability.}
\label{section:numerical}
From the results in the previous section on bounded m-solution, we observe that if we define $Y:M\times N\to \mathbb{R}$ such that
\begin{multline}
Y(x,u) = b \Delta^2_p(h(x))\\ + \left(D_{x}hV(x,u)+ \dfrac{1}{2} \sum_{l = 1}^d D_{x}h\nabla^{{}^MG}_S(\sigma_l)(x,u) + \dfrac{1}{2} \sum_{l = 1}^d{}^PG_S(D_{x}h\sigma_l)(x,u) \right)[\Delta^2_p(h(x))],
\end{multline}
then a process $(X_t,U_t)\in M\times N$ gives a bounded m-solution if it satisfies $Y(X_t,U_t) = 0$. This is stated in proposition \ref{proposition:bounded m-solution}, but without explicitly introducing the function $Y$. However, since $dim(N) = m\geq dim(\mathbb{R}) = 1$, implicit function theorem cannot be used to find $U_t$ unless $m = 1$.  This makes the computation of the process $U_t$ difficult. If we assume that $N$ is a Riemannian manifold with $g$ as the metric then we can use gradient descent to minimize $K = Y^2(X_t,\cdot):N\to \mathbb{R}$ i.e. 
the solution of $\dfrac{du}{ds} = -g^\sharp dK$ as $s\to \infty$ at every time $t\in (0,a]$. Of course, the gradient descent has a problem of converging to a local minimum. Therefore, if we start with $U_0\in N$ that ensures $Y(X_0,U_0) = 0$ and consider the initial condition for  $\dot{u} = -g^\sharp dK$ as the solution of the previous time step, then we may be able to avoid local minima. From a computational perspective, implementation of gradient descent at every time step is not recommended as it will unavoidably add to the computational complexity. Moreover, as this solution is not a result of application of implicit function theorem, we do not know if the resulting process $U_t$ will be a semi-martingale. In the following proposition we address this dimensionality issue.
\begin{proposition}
\label{proposition:high_index_approximate_solution}
Let the completely high index SDAE be given by equation  \eqref{equation: high index SDAE},
\begin{subequations}
\begin{equation}
\mathbf{d}X_t = \left[V(X_t,U_t) + \dfrac{1}{2} \sum_{l = 1}^d  {}^MG(\sigma_l(X_t,U_t))\right]dt + \sum_{l = 1}^d\sigma_l(X_t,U_t) dW^l_t,
\tag{\ref{equation: high index SDAE - SDE}}
\end{equation}
\begin{equation}
p = h(X_t), \tag{\ref{equation: high index SDAE - AE}}
\end{equation}
\end{subequations}
where $X_t\in M$, $U_t\in N$ with $N$ as a Riemannian manifold with metric $g$, $p\in P$, $h:M \to P$, $V:M\times N\to TM$ such that $\tau_M(V(\cdot, u)) = Id_M$ for all $u\in N$, $\sigma_l:M\times N\to TM$ such that $\tau_M(\sigma_l(\cdot, u)) = Id_M$ for all $u\in N$ and $l\in \{1, 2, ..., d\}$, ${}^MG$ is a diffusion generator on $M$; and $M$, $N$, and $P$ are smooth manifolds of dimension $n$, $m$, and $q$ respectively. Let $P$ be a Riemannian manifold.  Suppose there exists a semi-martingale $(X_t,U_t)\in M\times N$ that satisfies
\begin{subequations}
\label{equation:solution_bounded-m}
\begin{equation}
\mathbf{d}X_t = \left[V(X_t,U_t) + \dfrac{1}{2} \sum_{l = 1}^d  {}^MG(\sigma_l(X_t,U_t))\right]dt + \sum_{l = 1}^d\sigma_l(X_t,U_t) dW^l_t,
\end{equation}
\begin{equation}
\delta U_t = \alpha_0(X_t,U_t) a(X_t,U_t) dt + \sum_{l = 1}^d \alpha_l(X_t,U_t)a(X_t,U_t) dW^l_t;
\end{equation}
\end{subequations}
where
\[\alpha_0(X_t,U_t) = \dfrac{-D_1Y \left[V(X_t,U_t) + \dfrac{1}{2} \sum_{l = 1}^d  \nabla^{{}^MG}_S(\sigma_l(X_t,U_t))\right]}{D_2Y a(X_t,U_t)},\]
\[\alpha_l(X_t,U_t) = \dfrac{-D_1Y\sigma_l(X_t,U_t)}{D_2Y a(X_t,U_t)},\]
\[a\not \in Ker(D_2Y),\]
$a:M\times N\to TN$ such that $a(x,u)\in T_uN$; and $Y:M\times N\to \mathbb{R}$ such that
\begin{multline*}
Y(x,u) = \\ b \Delta^2_p(h(x)) + \left(D_{x}hV(x,u)+ \dfrac{1}{2} \sum_{l = 1}^d D_{x}h\nabla^{{}^MG}_S(\sigma_l)(x,u) + \dfrac{1}{2} \sum_{l = 1}^d{}^PG_S(D_{x}h\sigma_l)(x,u) \right)[\Delta^2_p(h(x))],
\end{multline*}
where $b > \lambda/(\alpha {\epsilon^4})$. If there exists $U_0\in N$ such that $Y(X_0,U_0) = 0$ and if $D_2Y(X_t,U_t)\not = 0$, then $(X_t,U_t)$ is a bounded m-solution for the high index SDAE  \eqref{equation: high index SDAE} such that 
\[\mathbb{P}((\sup_{t\in(0,a]}\Delta_p(h(X_t)))> \epsilon)< \alpha\] and $\epsilon$ is much smaller than the injectivity radius $\mathfrak{I}_p$ from $p\in P$.
\end{proposition}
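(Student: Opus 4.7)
The plan is to show that the specified choices of $\alpha_0$ and $\alpha_l$ are precisely what is needed to kill the Stratonovich differential $\delta Y(X_t, U_t)$, so that $Y(X_t, U_t)$ stays pinned at its initial value $0$. Once $Y \equiv 0$ along the trajectory, the algebraic condition of Proposition \ref{proposition:bounded m-solution} holds pointwise, and the conclusion on the bounded m-solution (including the cut-locus avoidance bound via Corollary \ref{corollary:cutlocus_issue}) follows immediately.

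Concretely, the first step is to convert the intrinsic SDE for $X_t$ into its Stratonovich form using equation \eqref{equation:conversion_shorthand_stratonovich}, giving
\[
\delta X_t = \Bigl[V(X_t,U_t) + \tfrac{1}{2}\sum_{l=1}^d \nabla^{{}^MG}_S(\sigma_l(X_t,U_t))\Bigr]\,dt + \sum_{l=1}^d \sigma_l(X_t,U_t)\circ dW^l_t.
\]
Applying the ordinary chain rule (which is valid in Stratonovich calculus) to $Y:M\times N\to \mathbb{R}$ yields
\[
\delta Y(X_t,U_t) = D_1Y\,\delta X_t + D_2Y\,\delta U_t.
\]
Substituting $\delta U_t = \alpha_0(X_t,U_t)\,a(X_t,U_t)\,dt + \sum_{l=1}^d \alpha_l(X_t,U_t)\,a(X_t,U_t)\circ dW^l_t$ and collecting the $dt$ and $dW^l_t$ coefficients, I would verify that the displayed formulas for $\alpha_0$ and $\alpha_l$ are exactly the unique scalars (using that $D_2Y\,a \ne 0$ by the assumption $a\notin \operatorname{Ker}(D_2Y)$) that annihilate both coefficients. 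Hence $\delta Y(X_t,U_t) = 0$, which combined with $Y(X_0,U_0) = 0$ gives $Y(X_t,U_t) = 0$ for all $t$ in the interval of existence.

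The second step is to unpack what $Y(X_t,U_t) = 0$ means: rearranging the definition of $Y$, this is identical to the drift-matching identity
\[
\Bigl(D_{X_t}hV + \tfrac{1}{2}\sum_{l=1}^d D_{X_t}h\,\nabla^{{}^MG}_S(\sigma_l) + \tfrac{1}{2}\sum_{l=1}^d {}^PG_S(D_{X_t}h\,\sigma_l)\Bigr)[\Delta^2_p(h(X_t))] = -b\,\Delta^2_p(h(X_t))
\]
required in the hypothesis of Proposition \ref{proposition:bounded m-solution}. Since $b > \lambda/(\alpha\epsilon^4)$ by assumption, that proposition applies directly and yields $\mathbb{P}\bigl(\sup_{t\in(0,a]}\Delta_p(h(X_t)) > \epsilon\bigr) \le \alpha$. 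Combining with the assumption $\epsilon \ll \mathfrak{I}_p$ and invoking Corollary \ref{corollary:cutlocus_issue} completes the argument that $(X_t,U_t)$ is a genuine bounded m-solution with controlled probability of hitting the cut-locus.

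The main delicate point is not the computation itself but the two well-posedness requirements: that the denominator $D_2Y\,a(X_t,U_t)$ is nonzero along the trajectory (guaranteed here by the standing assumption $D_2Y(X_t,U_t) \ne 0$ together with $a \notin \operatorname{Ker}(D_2Y)$), and that the Stratonovich SDE for $(X_t,U_t)$ actually admits a semi-martingale solution on some interval $(0,a]$, which is part of the hypothesis (\emph{suppose there exists a semi-martingale}). The routine verification that $D_2Y\,\alpha_l\,a = -D_1Y\,\sigma_l$ and $D_2Y\,\alpha_0\,a = -D_1Y[V + \tfrac{1}{2}\sum_l \nabla^{{}^MG}_S(\sigma_l)]$ is then a direct substitution, leaving the bound on $\mathbb{P}$ as the content inherited from Proposition \ref{proposition:bounded m-solution}.
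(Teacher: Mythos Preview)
Your proposal is correct and follows essentially the same approach as the paper: convert the intrinsic SDE for $X_t$ to Stratonovich form, apply the Stratonovich chain rule to $Y(X_t,U_t)$, and verify that the specified $\alpha_0,\alpha_l$ annihilate the $dt$ and $dW^l_t$ coefficients so that $Y$ stays at its initial value $0$, whence Proposition~\ref{proposition:bounded m-solution} yields the probability bound. Your invocation of Corollary~\ref{corollary:cutlocus_issue} at the end is a harmless extra remark (the paper's proof does not cite it), but otherwise the structure and content match.
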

\begin{proof}
As $Y:M\times N\to \mathbb{R}$ such that
\begin{multline*}
Y(x,u) =\\ b \Delta^2_p(h(x)) + \left(D_{x}hV(x,u)+ \dfrac{1}{2} \sum_{l = 1}^d D_{x}h\nabla^{{}^MG}_S(\sigma_l)(x,u) + \dfrac{1}{2} \sum_{l = 1}^d{}^PG_S(D_{x}h\sigma_l)(x,u) \right)[\Delta^2_p(h(x))],
\end{multline*}
from proposition \ref{proposition:bounded m-solution}, we know that if there exists a semi-martingale $(X_t,U_t)\in M\times N$ such that it satisfies $Y(X_t,U_t) = 0$, then $\mathbb{P}((\sup_{t\in(0,a]}\Delta_p(h(X_t)))> \epsilon)< \alpha$ for large enough $b>0$.
If we assume that
\[\delta U_t = \alpha_0(X_t,U_t) a(X_t,U_t) dt + \sum_{l = 1}^d \alpha_l(X_t,U_t)a(X_t,U_t) dW^l_t,\]
for some $\alpha_i(X_t,U_t) \in \mathbb{R}$ for $i\in \{0, 1, ..., d\}$, and $a(x,\cdot)\in \mathfrak{X}(N)$; then Stratonovich differentiation of $Y(X_t,U_t)$ gives 
\[D_1Y \left[V(X_t,U_t) + \dfrac{1}{2} \sum_{l = 1}^d  \nabla^{{}^MG}_S(\sigma_l(X_t,U_t))\right]dt + \sum_{l = 1}^d D_1Y\sigma_l(X_t,U_t) dW^l_t\]
\[+ \alpha_0(X_t,U_t) D_2Y a(X_t,U_t) dt +  \sum_{l = 1}^d \alpha_l(X_t,U_t)D_2Y a(X_t,U_t) dW^l_t = 0.\]
If $a\not \in Ker(D_2Y)$, then
\[\alpha_0(X_t,U_t) = \dfrac{-D_1Y \left[V(X_t,U_t) + \dfrac{1}{2} \sum_{l = 1}^d  \nabla^{{}^MG}_S(\sigma_l(X_t,U_t))\right]}{D_2Y a(X_t,U_t)},\]
and
\[\alpha_l(X_t,U_t) = \dfrac{-D_1Y\sigma_l(X_t,U_t)}{D_2Y a(X_t,U_t)}\]
will ensure that $Y(X_t,U_t) = 0$ for any $U_0$ that satisfies $Y(X_0,U_0) = 0$.
\end{proof}
\subsection{Algorithm}
The computation of the initial condition $U_0$ is still a problem. If $N$ is a Riemannian manifold then we can use gradient descent to compute $U_0$. In any case, we will assume that we know the initial condition $U_0$ that ensures $Y(X_0,U_0) = 0$. Now, in proposition \ref{proposition:high_index_approximate_solution}, we have presented a method of computing the bounded m-solution by simply solving an SDE. However, this method depends on choosing a vector field $a(x,\cdot )\in \mathfrak{X}(N)$ such that $a\not\in Ker(D_2Y)$ and choosing $b>\dfrac{\lambda}{\alpha{\epsilon^4}}$. Therefore, the problem is two folds:
\begin{itemize}
\item Computing \[\lambda =\sup_{(x,u)\in U\times V} \sum_{l = 1}^d\left( (D_{x}h)\sigma_l(x,u)[\Delta^2_p(h(x))]\right)^2\] for \[U\times V = \{(X_t(\omega),U_t(\omega))\in M\times N| t\in (0,a], \omega\in \Omega\},\] where $\Omega$ is the sample space. This is necessary to choose $b>\dfrac{\lambda}{\alpha{\epsilon^4}}$ as per proposition \ref{proposition:bounded m-solution}.
\item Choosing the vector field $a(x,\cdot)\in \mathfrak{X}(N)$ such that $a\not\in Ker(D_2Y)$. 
\end{itemize}
The computation of $\lambda =\sup_{(x,u)\in U\times V} \sum_{l = 1}^d\left( (D_{x}h)\sigma_l(x,u)[\Delta^2_p(h(x))]\right)^2$ can be done using Monte-Carlo approach. However, since we are only interested in choosing large enough $b$ that ensures \[\mathbb{P}\left(\sup_{s\in [0,t]}(\Delta_p(h(X_s))>\epsilon\right)\leq \alpha,\]
algorithmically, we can double the current value of $b$ whenever $\Delta_p(h(X_t))>\epsilon$. In this way, we avoid the computation of $\lambda$.
Now for choosing the vector field $a\not\in Ker(D_2Y)$. Let $a(x,u) = g^\sharp_u D_2Y(x,u)$. Therefore, as long as $D_2Y\not =0$, we get the required solution by solving the following SDE:
\begin{subequations}
\label{equation:numerical_bounded-m-sol}
\begin{equation}
\label{equation:numerical_bounded-m-sol:SDEfor X}
\mathbf{d}X_t = \left[V(X_t,U_t) + \dfrac{1}{2} \sum_{l = 1}^d  {}^MG(\sigma_l(X_t,U_t))\right]dt + \sum_{l = 1}^d\sigma_l(X_t,U_t) dW^l_t,
\end{equation}
\begin{multline}
\label{equation:numerical_bounded-m-sol:SDEfor U}
\delta U_t = -D_1Y \left[V(X_t,U_t) + \dfrac{1}{2} \sum_{l = 1}^d  \nabla^{{}^MG}_S(\sigma_l(X_t,U_t))\right]\dfrac{g^\sharp_{U_t}D_2Y(X_t,U_t)}{g(D_2Y,D_2Y)} dt\\ - \sum_{l = 1}^d {D_1Y\sigma_l(X_t,U_t)}\dfrac{g^\sharp_{U_t}D_2Y(X_t,U_t)}{g(D_2Y,D_2Y)} dW^l_t.
\end{multline}
\end{subequations}
This method is summarized in algorithm \ref{alg:bounded-sol_manifold}.
{
\begin{algorithm}
\hrule \vspace{1em}
{Numerical computation of bounded m-solution when $D_2Y(X_t,U_t) \not = 0$}\\
\label{alg:bounded-sol_manifold}
\hrule
\begin{algorithmic}[1]
\STATE{If $U_0$ is not available, then use gradient descent to compute $U_0$ such that $Y(X_0,U_0) = 0$;}
\STATE{$b = 1$;}
\STATE {$t_0 = 0$;}
\STATE{Choose $\delta t> 0$;}
\FOR{$i = 1$ \TO $i = T/\delta t$}
\STATE{Solve the SDE  \eqref{equation:numerical_bounded-m-sol} upto time $\delta t$ with initial condition as $(X_{t_{i-1}},U_{t_{i-1}})$;}
\STATE{$t_i = t_{i-1} + \delta t$;}
\WHILE{$\Delta_p(h(X_{t_i}))>\epsilon$}
\STATE{$b = 2b$;}
\STATE{Solve the SDE  \eqref{equation:numerical_bounded-m-sol} upto time $\delta t$ with initial condition as $(X_{t_{i-1}},U_{t_{i-1}})$;}
\STATE{$t_i = t_{i-1} + \delta t$;}
\ENDWHILE
\ENDFOR
\end{algorithmic}
\hrule
\end{algorithm}
}	
Of course, the method fails when $D_2Y(X_t,U_t) = 0$, in which case we will return to the using gradient descent in pseudo time at every time step. However, in this case we will not know if the resulting solution is a semi-martingale or not. Nonetheless, at every time step we will have a point $U_t\in N$ that ensures $Y(X_t,U_t) = 0$. We will call such a solution as \textit{approximate bounded m-solution}. This can be summarized in form of algorithm \ref{alg:approximate-bounded-sol_manifold}.
{
\begin{algorithm}
\hrule \vspace{1em}
{Numerical computation of approximate bounded m-solution.}\\
\label{alg:approximate-bounded-sol_manifold}
\hrule
\begin{algorithmic}[1]
\STATE{If $U_0$ is not available, then use gradient descent to compute $U_0$ such that $Y(X_0,U_0) = 0$;}
\STATE{$b = 1$;}
\STATE {$t_0 = 0$;}
\STATE{Choose $h> 0$;}
\STATE{Choose a positive integer $N$;}
\STATE{$\delta t = N*h$;}
\FOR{$i = 1$ \TO $i = T/\delta t$}
\STATE{$(X_{t^{0}},U_{t^{0}}) = (X_{t_{i-1}},U_{t_{i-1}})$;}
\FOR{$j = 1$ \TO $j = N$}
\IF{$D_2Y(X_{t^0},U_{t^0})\not =0$}
\STATE{Numerical time stepping for SDE  \eqref{equation:numerical_bounded-m-sol} with initial condition as $(X_{t^{j-1}},U_{t^{j-1}})$ and time step of $h$;}
\STATE{$t^j = t^{j-1} + h$;}
\ELSE
\STATE{Numerical time stepping for SDE  \eqref{equation:numerical_bounded-m-sol:SDEfor X} with initial condition as $X_{t^{j-1}}$, time step of $h$, and constant value of $U_t = U_{t^{j-1}}$.}
\STATE{$t^j = t^{j-1} + h$;}
\STATE{Use gradient descent to compute $U_{t^j}$ such that $Y(X_{t^j},U_{t^j}) = 0$, with initial guess as $U_{t^{j-1}}$;}
\ENDIF
\ENDFOR
\STATE{$t_i = t_{i-1} + \delta t$;}
\STATE{$(X_{t_{i}},U_{t_{i}}) = (X_{t^{N}},U_{t^{N}})$;}
\WHILE{$\Delta_p(h(X_{t_i}))>\epsilon$}
\STATE{$b = 2b$;}
\STATE{$(X_{t^{0}},U_{t^{0}}) = (X_{t_{i-1}},U_{t_{i-1}})$;}
\FOR{$j = 1$ \TO $j = N$}
\IF{$D_2Y(X_{t^0},U_{t^0})\not =0$}
\STATE{Numerical time stepping for SDE  \eqref{equation:numerical_bounded-m-sol} with initial condition as $(X_{t^{j-1}},U_{t^{j-1}})$ and time step of $h$;}
\STATE{$t^j = t^{j-1} + h$;}
\ELSE
\STATE{Numerical time stepping for SDE  \eqref{equation:numerical_bounded-m-sol:SDEfor X} with initial condition as $X_{t^{j-1}}$, time step of $h$, and constant value of $U_t = U_{t^{j-1}}$.}
\STATE{$t^j = t^{j-1} + h$;}
\STATE{Use gradient descent to compute $U_{t^j}$ such that $Y(X_{t^j},U_{t^j}) = 0$, with initial guess as $U_{t^{j-1}}$;}
\ENDIF
\ENDFOR
\STATE{$t_i = t_{i-1} + \delta t$;}
\STATE{$(X_{t_{i}},U_{t_{i}}) = (X_{t^{N}},U_{t^{N}})$;}
\ENDWHILE
\ENDFOR
\end{algorithmic}
\hrule
\end{algorithm}
}

\subsection{Example}
\begin{figure}[h]
\includegraphics[width=\linewidth]{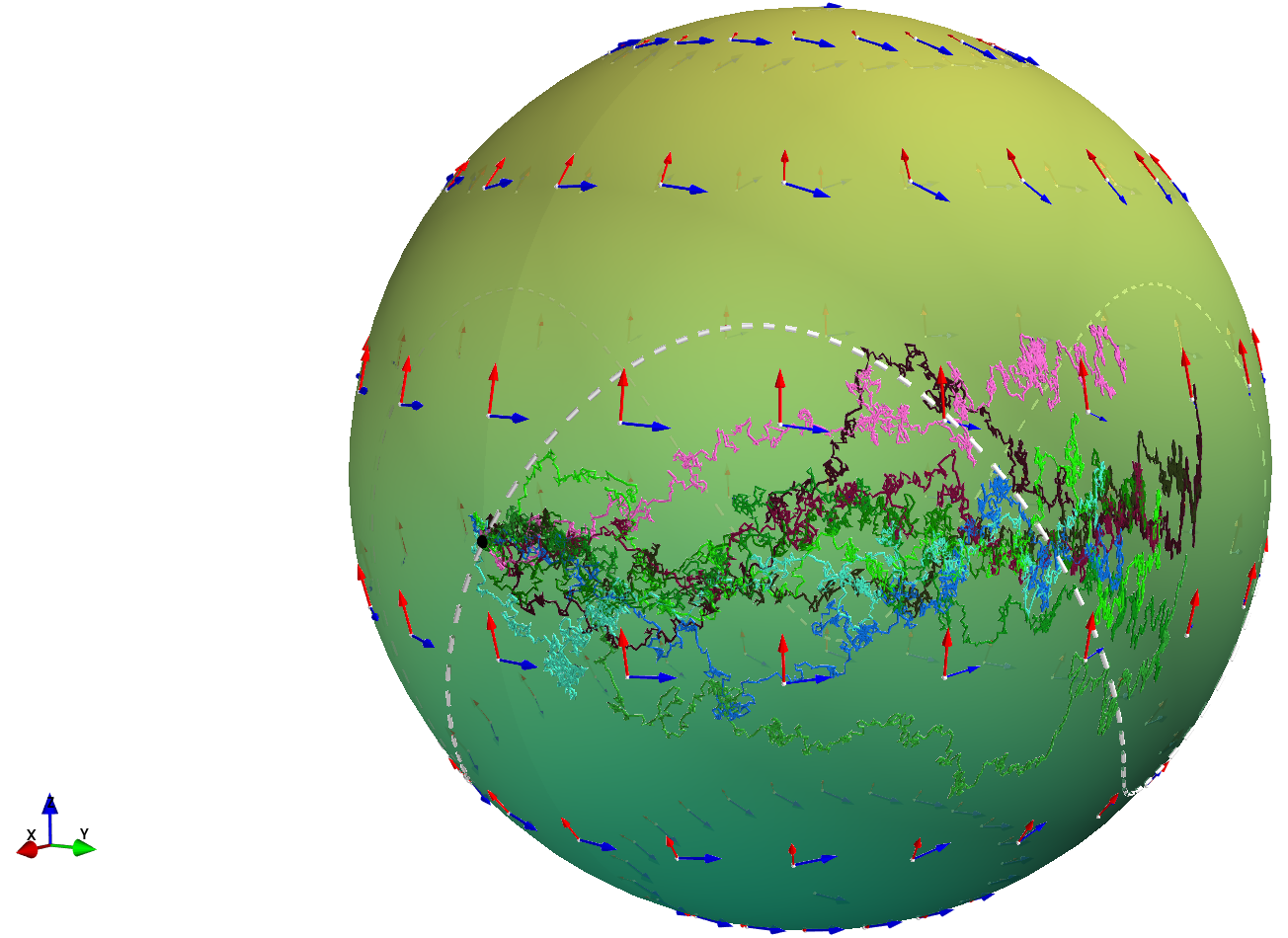}
\caption{The blue colored vector field represents the vector field $i^*K_2$, and the red  vector field represents $i^*K_1$. White line shows the constraint $h = 0$, and the colored lines are the plots for different realizations for the SDE given by equation \eqref{equation:example:_SDE_for_SDAE} when the algebraic variable $u = 0$ and initial condition $(1,0,0)$ (represented by black dot).}
\label{fig:SDAE_withoutcontrol}
\end{figure}
Let us consider a unit sphere $S^2$ embedded in $\mathbb{R}^3$ with center as $0\in \mathbb{R}^3$. We will consider two Wiener processes as the noise drivers. Hence, we need 3 vector fields and a diffusion generator to describe an SDE on the sphere. For the diffusion generator, we will consider standard Ito SDEs on the sphere, which means that we need Ito diffusion generator $G_I$ on the sphere. As $S^2$ is embedded in $\mathbb{R}^3$, we can obtain the vector fields using the pull-back by the embedding function, i.e. if $i:S^2\hookrightarrow \mathbb{R}^3$ is the embedding then any vector field $K\in \mathfrak{X}(Img(i))$ induces a vector field $i^*K\in\mathfrak{X}(S^2)$.

We will consider two vector fields $\sigma_1, \sigma_2 \in \mathfrak{X}(S^2)$ such that $\sigma_j = 0.3i^*K_j$ for $j\in\{1,2\}$. We will consider 
\[K_1(x) = P_x(0,0,1), \&\]
\[K_2(x) = P_x(0,1,0),\]
where $P_x:\mathbb{R}^3\to \mathbb{R}^3$ such that $P_x\circ P_x = P_x$ and $Img(P_x) = Img(T_x i)$.
For the drift coefficient, we will consider $V:M\times TM\to TM$ such that \[V(x,u) = 2i^*K_2(x) + u(i^*K_1(x))\] for all $x\in M$ and $u\in \mathbb{R}$.

\begin{figure}[h!]
\begin{center}
\includegraphics[width=\linewidth]{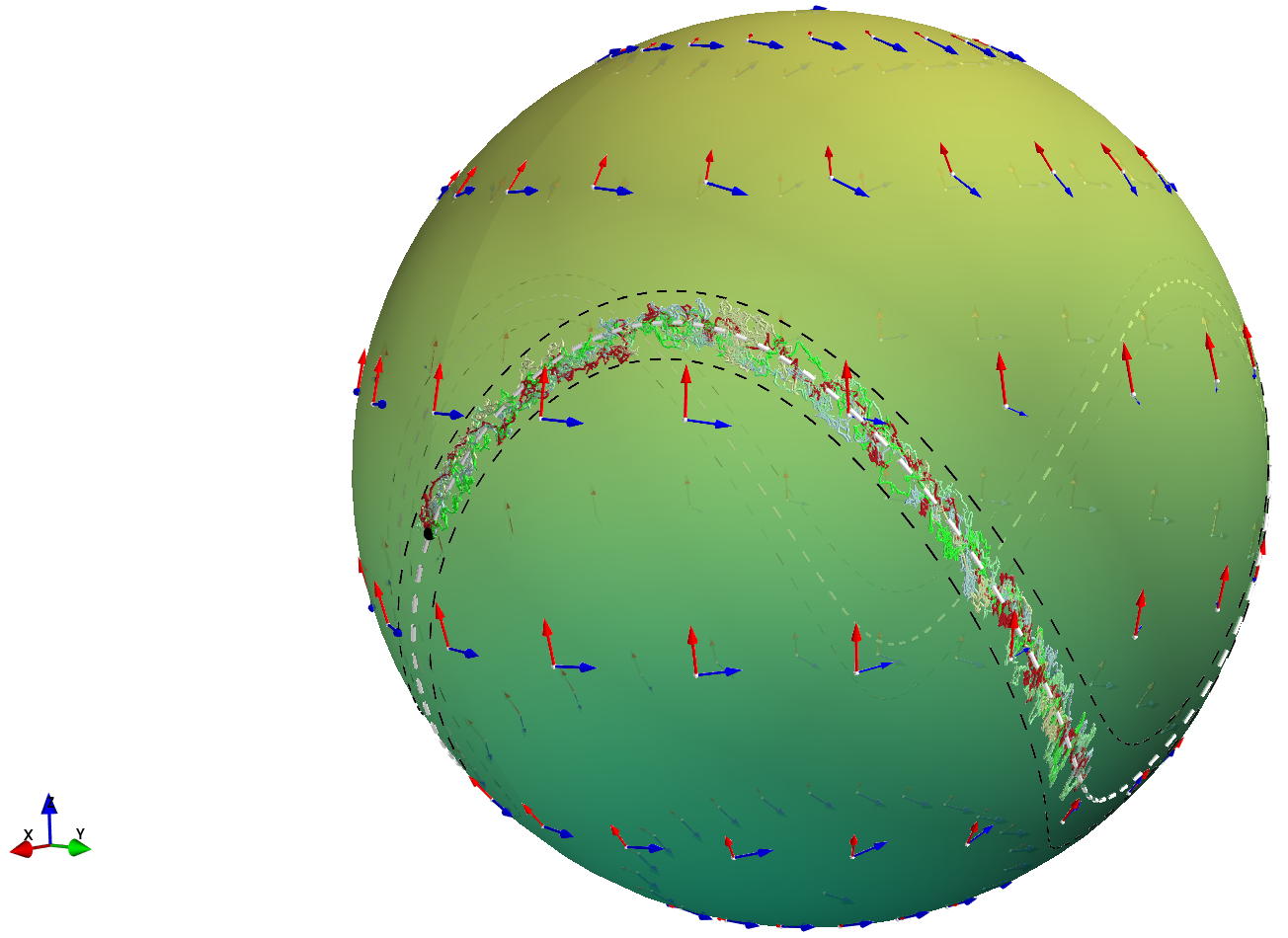}
\end{center}
\vspace{-1em}
\caption{The blue colored vector field represents the vector field $i^*K_2$, and the red  vector field represents $i^*K_1$. White dashed line shows the constraint $h = 0$. Black dashed line represent bound of $\epsilon = 0.1$ around the constraint. The colored lines are the plots for different realizations of the bounded m-solution given as per algorithm \ref{alg:bounded-sol_manifold}.}
\label{fig:SDAE_withcontrol0.1}
\end{figure}

We will consider the constraint as $h:S^2\to \mathbb{R}$ such that in local chart obtained using stereographic projection from $(0,0,1)$,
\[h(X,Y) = 1 + \dfrac{3YX^2 - Y^3}{(X^2 + Y^2)^{3/2}} - \sqrt{X^2 +Y^2} = 0,\]
where $(X,Y)$ is the local coordinates in the stereographic projection.
With this constraint, the SDAE is given as
\begin{subequations}
\begin{equation}
\label{equation:example:_SDE_for_SDAE}
\mathbf{d}x = \left[ 2i^*K_2 + u(i^*K_1(x)) + \dfrac{1}{2}G_I(0.3 i^*K_1) + \dfrac{1}{2}G_I(0.3i^*K_2)\right]dt + \sum_{l = 1}^2 0.3i^*K_l dW^l_t,
\end{equation}
\begin{equation}
h(x) = 0.
\end{equation}
\end{subequations}
In figure \ref{fig:SDAE_withoutcontrol}, we have shown the plots for the unconstrained SDE and the constraint equation.

As the target manifold for the constraint is $\mathbb{R}$, the problem of cut-locus does not arise. From proposition \ref{proposition:bounded m-solution}, we know that
the solution is given by $(X_t,U_t)$ such that it satisfies $Y(X_t,U_t) = 0$, where
\[Y(x,u) = b h(x) + \left(dh\cdot V(x,u)+ \dfrac{1}{2} \sum_{l = 1}^2 dh\cdot \nabla^{G_I}_S(\sigma_l)(x) + \dfrac{1}{2} \sum_{l = 1}^2{}^\mathbb{R}G_S(dh\cdot \sigma_l)(x) \right),\]
\[= b h(x) + dh(x)\cdot V(x,u)+ \dfrac{1}{2} \sum_{l = 1}^2 G_I(\sigma_l)(x)[h(x)].\]
Therefore, 
\begin{equation}
\label{equation:example_solution}
u = \dfrac{- bh(x) - 2dh\cdot i^*K_2(x)- \dfrac{1}{2} \sum_{l = 1}^2 G_I(0.3i^*K_l)(x)[h(x)]}{dh\cdot i^*K_1(x)},
\end{equation}
will ensure that $Y(x,u) =0$. The approximate solution obtained using this solution is shown in figure \ref{fig:SDAE_withcontrol0.1} for $\epsilon = 0.1$.

\section{Concluding remarks}
In this article we have extended the ideas of Stochastic Differential-Algebraic Equations (SDAEs) from the Euclidean spaces to the setting of smooth manifolds. We have observed that, like SDAEs on Euclidean spaces, SDAEs on manifolds can be classified into index 1 SDAE and high index SDAE. The necessary condition for existence of the solution of the SDAE gave a class of SDAE, that we have called as ill-posed, for which there is no solution. This classification is shown in the following flowchart.

\begin{center}
\begin{tikzpicture}[node distance=2cm]
\node (start) [startstop] {{\textbf{SDAEs on Manifolds}}};
\node (base0) [process, below of=start, xshift=-3.5cm] {\textbf{Index 1 SDAEs} (Derivative of constraint with algebraic variable is invertible)};
\node (base1) [process, below of=start, xshift=+3.5cm] {\textbf{High index SDAEs} (Derivative of constraint with algebraic variable is not invertible)};
\node (pro1) [process, below of=base1, xshift=-6cm] {\textbf{Completely high index SDAEs} (constraint is not a function of algebraic variable)};
\node (pro2) [process, below of=base1, xshift=+1cm] {High index SDAEs not completely high index (constraint is a function of algebraic variable, but it is not index 1)};
\node (last1) [process, below of=pro1, xshift=-1.5cm] {\textbf{Ill-posed} (no solution)};
\node (last2) [process, below of=pro1, xshift=+5.5cm] {Not ill-posed};
\draw [arrow] (start) -- (base0);
\draw [arrow] (start) -- (base1);
\draw [arrow] (base1) -- (pro1);
\draw [arrow] (base1) -- (pro2);
\draw [arrow] (pro1) -- (last1);
\draw [arrow] (pro1) -- (last2);
\end{tikzpicture}
\end{center}

We have shown that it is possible to approximately solve completely high index SDAE, ill-posed or not. The approximate solutions are obtained by relaxing the constraint. If $h:M\to P$ is the constraint function such that we require $h(X_t) = p$, then the first way of relaxing the constraint is by allowing the distance of $h(X_t)\in P$ from point $p\in P$ to be less than or equal to some $\epsilon>0$. We have presented a technique for finding such approximate solution. However, this technique relied on construction of a function $Z:N\to P$, which is not easy to construct on manifolds. For this reason we find that it is convenient to relax the constraint further by allowing the approximate constraint be violated upto a bounded probability. We have presented numerical algorithm for computing such approximate solution. These numerical methods are similar to the one presented in \cite{suthar2021explicit} for SDAEs on Euclidean spaces. We find that with the diffusion generator approach of describing the SDEs on manifolds, all the results for SDAEs on Euclidean spaces from \cite{suthar2021explicit}, translate to the manifold settings with very little modifications.

\bibliography{3}
\end{document}